\documentclass[11pt]{article}
\usepackage[a4paper]{geometry}
\usepackage{authblk}
\usepackage{graphicx}
\usepackage{subcaption}
\usepackage{amsmath}
\usepackage{amsthm}
\usepackage{amsfonts}
\usepackage{amssymb}
\usepackage{bm}
\usepackage{pifont}
\usepackage{caption}
\usepackage{arydshln}
\usepackage{siunitx}
\usepackage{tabularx}
\usepackage{booktabs}
\usepackage[numbers, sort&compress]{natbib}
\usepackage{doi}
\hypersetup{
    colorlinks=true,
    linkcolor=blue,
    anchorcolor=blue,
    citecolor=blue,
    urlcolor=blue
}

\newcommand{\al}{\alpha}
\newcommand{\ep}{\epsilon}
\newcommand{\m}{\mathbb}
\newcommand{\ma}{\mathcal}
\newcommand{\Om}{\Omega}
\newcommand{\om}{\omega}
\newcommand{\p}{\partial}
\newcommand{\ga}{\gamma}

\newcommand{\la}{\lambda}
\newcommand{\La}{\Lambda}
\newcommand{\ka}{\kappa}
\newcommand{\vphi}{\varphi}
\newcommand{\wt}{\widetilde}
\newcommand{\into}{\hookrightarrow}

\newcommand{\per}{\text{per}}
\newcommand{\ip}[1]{\left\langle #1\right\rangle}
\newcommand{\bip}[1]{\langle\!\langle #1\rangle\!\rangle}
\newcommand{\Span}[1]{\mathrm{span}\left\{#1\right\}}
\newcommand{\df}[2]{\dfrac{d#1}{d#2}}
\newcommand{\pf}[2]{\dfrac{\partial #1}{\partial #2}}
\newcommand{\conj}[1]{\overline{#1}}
\newcommand{\cl}[1]{\operatorname{cl}\left(#1\right)}
\newcommand{\dint}{\displaystyle\int\nolimits}
\renewcommand{\binom}[2]{\begin{pmatrix}#1\\#2\end{pmatrix}}

\DeclareMathOperator{\re}{Re}
\DeclareMathOperator{\im}{Im}

\newtheorem{theorem}{Theorem}[section]

\numberwithin{equation}{section}
\allowdisplaybreaks[4]

\title{Calculating Domain of Attraction Boundary of Power Systems Based on the Gentlest Ascent Dynamics}
\author[1]{Sixu Wu\thanks{lucaswu@amss.ac.cn}}
\author[2]{Chenmin Zhang\thanks{202510183372@mail.scut.edu.cn}}
\author[3]{Aiqing Zhu\thanks{zaq@lsec.cc.ac.cn}}
\author[2]{Yang Liu\thanks{epyangliu@scut.edu.cn
}}
\author[4]{Jianxi Lin\thanks{linjianxi@csg.cn
}}
\author[1,5]{Yifa Tang\thanks{tyf@lsec.cc.ac.cn}
\footnote{corresponding author}}
\affil[1]{State Key Laboratory of Mathematical Sciences, Academy of Mathematics and Systems Science, Chinese Academy of Sciences, Beijing 100190, China}
\affil[2]{School of Electric Power Engineering, South China University of Technology, Guangzhou 510640, China}
\affil[3]{Department of Mathematics, National University of Singapore, 10 Lower Kent Ridge 4 Road, 119076, Singapore}
\affil[4]{Energy and Power Planning Laboratory of Energy Development Research Institute, China Southern Power Grid, Guangzhou 510530, China}
\affil[5]{School of Mathematical Sciences, University of Chinese Academy of Sciences, Beijing 100049, China}

\date{}

\begin{document}
\maketitle

\begin{abstract}
The power system is a fundamental public utility for human beings, and is becoming increasingly important due to the growing need for electricity all over the world. Recent large‑scale blackouts in the Iberian Peninsula and the UK raise public concern about the transient stability of power systems under impact faults / following disruptive disturbances. The transient stability of the power system is primarily determined by the post-disturbance synchronizing capability of synchronous generators at a specific operating point. This problem is formulated as identifying the domain of attraction boundary of the operating point, i.e., the asymptotically stable equilibrium, of the power system model. Taking the benchmark model for transient stability analysis of the power system dominated by synchronous generators as the research object, this report employs a gentlest ascent dynamics (GAD) method for locating 1-saddle points, an adjoint operator method for locating periodic orbits, and corresponding numerical algorithms for stable manifolds, all aimed at computing the domain of attraction boundary of generator systems. These algorithms transform the determination of the domain of attraction boundary into the construction of unstable critical elements (saddle points and periodic orbits) and their stable manifolds. Theoretically, under certain assumptions we rigorise the intuition that the domain of attraction boundary is the closure of the union of the stable manifolds of critical elements with index one, and establish a stability theory for a perturbed GAD system. Numerical experiments are conducted on a two-machine system, a three-machine system containing only saddle points, and a three-machine system containing periodic orbits, validating the effectiveness and accuracy of the proposed algorithms. The results show that the algorithms presented herein accurately capture the geometric structure of the domain of attraction boundary, providing a new numerical tool for transient stability analysis of power systems.\\     
{\bf{Keywords:}} Synchronous generator systems, Domain of attraction boundary, Gentlest ascent dynamics, Adjoint operator method, Stable manifold, Periodic orbit, Geometric algorithm
\end{abstract}
\section{Introduction}
\label{sec:introduction}
Transient stability of power systems is a fundamental concern in electrical engineering, i.e., whether a stably operating power system can reach a new steady state after experiencing an unstable transient process caused by the tripping of generators, transmission lines, or a large amount of loads in a fault event. Transient stability depends on both the initial state of the system and the post-disturbance state. A usual method is to determine, through numerical simulation, whether a system subject to a specific disturbance from a given initial state can reach a new stable state --- a time-domain simulation approach. From a different perspective, one may find all states that evolve to a given stable state of a power system, i.e., finding the analytical description for the boundary of the domain of attraction of its stable equilibrium point --- the direct method.

The concept of the domain of attraction (DOA) is well established in nonlinear systems theory \cite{DEDS}. For an asymptotically stable equilibrium point (ASEP) of a nonlinear dynamical system, the DOA is defined as the set of all initial conditions whose trajectories converge to the ASEP as time tends to infinity \cite{DEDS}. In power systems, the DOA of a post-fault stable equilibrium point determines the set of admissible post-fault states that can lead to stable recovery. Therefore, estimating or exactly characterizing the DOA boundary is of paramount importance for power system planning, operation, and emergency control.

Existing approaches for DOA estimation can be grouped into two broad categories: indirect (non-direct) and direct. Among the indirect ones, we find time-domain simulation \cite{PSSC}, backward integration (reverse trajectory) techniques \cite{Genesio1985}, and stable manifold computation \cite{Chiang1988, DMSA}. Time-domain simulation is widely adopted in industry due to its high reliability and applicability to almost all kinds of models \cite{Stott1979}. However, it is computationally expensive, especially for large-scale systems, and fails to provide a quantitative measure of stability boundary or guidance for preventive control. Reverse trajectory methods can alleviate the conservativeness of initial estimates \cite{Bakhshizadeh2024} but suffer from exponential growth of computational burden with system dimension. Stable manifold methods, rooted in the geometric theory of dynamical systems, can deliver the exact DOA boundary by constructing the stable manifolds of unstable equilibrium points (UEPs) lying on that boundary \cite{Chiang1988, DMSA}. Nevertheless, identifying all such UEPs becomes computationally prohibitive even for moderately sized systems, and constructing high-dimensional stable manifolds remains a formidable challenge \cite{Hurayb2021, Kabalan2019}.

Direct approaches, for instance energy-based methods and those relying on Lyapunov's second theorem, aim to judge stability without numerically solving the differential equations, thus circumventing expensive post-fault integration \cite{Fouad1991, Pai1989}. Among these, the energy function technique stands as the most mature. For lossless network-reduction models, analytical energy functions can be derived via the first-integral principle \cite{Chu1999, Chu2005}. For lossy systems where transfer conductances cannot be ignored, numerical energy functions based on ray or trapezoidal approximations have been proposed \cite{Athay1979, Pai1977}. However, due to unavoidable numerical errors, the time derivatives of such numerically constructed functions are not guaranteed to be non-positive, potentially leading to erroneous stability assessments \cite{Uemura1972}. Moreover, key challenges persist: systematically building valid energy functions for systems that include transfer conductances and complex components, and reliably determining the stability boundary via methods such as the nearest UEP scheme \cite{ElAbiad1966, Chiang1989}, the controlling UEP method \cite{Chiang1987, Chiang1994}, or the potential energy boundary surface (PEBS) technique \cite{Kakimoto1984, Chiang1988a}.

In recent years, Lyapunov-based methods have witnessed significant progress. In particular, sum-of-squares (SOS) programming offers a systematic route to construct Lyapunov polynomials and approximate the DOA via semi-definite programming \cite{Parrilo2000, Papachristodoulou2005}. The expanding interior algorithm \cite{Anghel2013} and its improved variants \cite{Izumi2018, Liu2024} have been successfully applied to power system stability analysis. Yet the computational burden of SOS grows dramatically with the system dimension and the polynomial degree \cite{Chesi2011}. Linear matrix inequality (LMI) techniques and Takagi-Sugeno fuzzy models \cite{Takagi1985, Guerra2004} have also been employed for DOA estimation \cite{Li2023, Tang2024}, but they encounter an exponential increase in the number of fuzzy rules as the number of nonlinearities grows \cite{Ding2024}. Neural-network-based Lyapunov constructions \cite{Chang2019, Huang2022, Zhao2022, Liu2024a} enable rapid online estimation; nevertheless, their black-box nature raises concerns about interpretability and the lack of rigorous stability guarantees near the DOA boundary \cite{Zhang2021}.

A fundamentally different route to obtaining the exact DOA boundary originates from nonlinear dynamical systems theory. In a seminal work, Chiang, Hirsch, and Wu \cite{Chiang1988} established that, under certain hyperbolicity and transversality assumptions, the boundary of the domain of attraction of an asymptotically stable equilibrium point is contained in the union of the stable manifolds of the unstable equilibrium points on the boundary. Subsequently, Fisher et al. \cite{COSR2, HCRA} identified a flaw in the original lemma and strengthened the hypotheses while preserving the conclusion. For systems that satisfy these assumptions, the DOA boundary can be constructed by locating all critical elements (equilibrium points or periodic orbits) on the boundary and then computing their stable manifolds. Because stable manifolds of higher-index critical elements have lower dimensions, they occupy measure zero in the state space. Consequently, from a practical perspective, it suffices to consider only index-one critical elements, i.e., 1-saddle points and index-one periodic orbits. This geometric viewpoint reduces the DOA boundary computation to two tasks: (1) finding all 1-saddles and index-one periodic orbits on the boundary, and (2) computing their stable manifolds.

Inspired by this geometric viewpoint, this paper aims to develop novel numerical algorithms for computing the exact DOA boundary of power systems dominated by synchronous generators, by integrating the gentlest ascent dynamics (GAD) method \cite{GAD} and its infinite-dimensional extensions. The GAD method, originally proposed by E and Zhou \cite{GAD} for finding saddle points in chemical reaction dynamics, is an elegant augmented dynamical system that converts a 1-saddle point of the original system into an asymptotically stable equilibrium point of an augmented system. We apply the GAD method for the first time to locate 1-saddle points in power system DOA computation, achieving significantly faster convergence than brute-force search methods \cite{DMSA, SRNDS}. For index-one periodic orbits, we employ an adjoint-operator-based descent method \cite{CPOH} to locate the periodic orbit itself, and then extend the GAD philosophy to function spaces: we introduce an artificial viscosity term and prove that the perturbed GAD system is asymptotically stable, thereby enabling the numerical computation of the unstable eigen-direction of the periodic orbit and, subsequently, its stable manifold.  A comprehensive exposition of geometric algorithms and machine learning methods for dynamical systems and their applications can be found in \cite{Tang2026}.

We further provide a theoretical justification that under an additional topological homeomorphism assumption (A5), the closure of the union of the stable manifolds of index-one critical elements exactly equals the DOA boundary. This result establishes the theoretical foundation for our numerical approach.

The effectiveness and accuracy of the proposed algorithms are demonstrated on three benchmark systems: a two-machine system, a three-machine system containing only 1-saddles, and a three-machine system containing index-one periodic orbits. In the last example, the stable manifolds of the periodic orbits are successfully computed, and together with the stable manifolds of the 1-saddles, they constitute the complete DOA boundary. Numerical results show that the algorithms accurately capture the geometric structure of the DOA boundary, providing a new numerical tool for transient stability analysis of power systems.

The rest of this paper is organized as follows. Section \ref{sec:SGMPS} presents the synchronous-generator-based multi-machine power system model and shows how the problem reduces to a lower-dimensional angle dynamics. Section \ref{sec:SMDOA} reviews the theoretical foundation of the DOA boundary and proves that only index-one critical elements are needed. Section \ref{sec:NADOAB} describes the numerical algorithms for 1-saddle points, periodic orbits, and their stable manifolds. Section \ref{sec:NE} reports the numerical experiments. Section \ref{sec:conclusion} concludes the paper.

\section{Synchronous Generators-based Multi-Machine Power Systems}
\label{sec:SGMPS}
Consider a multi-machine power system consists of $n+1$ synchronous generators, each abstractly simplified as a rotating mass. Thus, it can be described by $n+1$ angles and the corresponding $n+1$ angular velocities. In engineering practice, one generator is often chosen as the reference machine, and the differences in angle and angular velocity of the other generators with respect to it are of interest, thereby reducing the system to $2n$ dimensions. Under the electro-mechanical transient model, the system is governed by the following dynamics \cite{Liu2024}
    \begin{equation}\label{eq:delomg}
    \left\{
        \begin{aligned}
            \dot \delta_1&=\om_1\\
            &\dots\\
            \dot \delta_n&=\om_n\\
            \dot \om_1&=\frac{\pi f_{\mathrm{n}}}{H_1}\left(P_{\mathrm{m1}}-P_{\mathrm{e1}}(\delta)\right)-\frac{D_1}{2H_1}\om_1\\
            &\dots\\
            \dot \om_n&=\frac{\pi f_{\mathrm{n}}}{H_n}\left(P_{\mathrm{m}n}-P_{\mathrm{e}n}(\delta)\right)-\frac{D_n}{2H_n}\om_n.
        \end{aligned}
    \right.
    \end{equation}
    Here $\delta=\left(\delta_1,\dots,\delta_n\right)^\top$ and $\om=\left(\om_1,\dots,\om_n\right)^\top$ denote the relative angles and angular velocities, respectively. The constant $f_{\mathrm{n}}>0$ represents the nominal frequency of the system in $\text{Hz}$, $H_i>0$ is the inertia time constant of the $i$-th generator in seconds, $D_i\geq 0$ is the friction coefficient of the $i$-th generator, and $P_{\mathrm{m}i}$ is the mechanical power of the $i$-th generator.

    The variable $P_{\mathrm{e}i}$ is the electromagnetic output power of the $i$-th generator, given by
    \[
        P_{\mathrm{e}i}=\sum_{j=1}^{n+1}E_iE_j\big(G_{ij}\cos(\delta_i-\delta_j)+B_{ij}\sin(\delta_i-\delta_j)\big),\quad \forall 1\leq i\leq n,
    \]
    where $\delta_{n+1}=0$ is the reference machine angle, the constant $E_i>0$ is the internal voltage of the $i$-th generator, the constant $G_{ij}\in \m R$ represents the reduced network conductance between generators $i$ and $j$, and the constant $B_{ij}\in \m R$ is the reduced network susceptance between generators $i$ and $j$, with both matrices $G$ and $B$ symmetric.

    It is easy to see that one equilibrium point of system \eqref{eq:delomg} is $\delta=\om=0$; this is because the original system has been translated in variables for computational convenience. Under actual operating conditions, the parameters are chosen so that $\delta=\om=0$ becomes an asymptotically stable equilibrium point of \eqref{eq:delomg}. Moreover, $D_i/H_i$ reflects the relative relationship between the damping intensity and the inertia magnitude of the $i$-th generator rotor; when the generators operate under similar conditions, this ratio is nearly constant. Therefore, we assume there exists a constant $\eta> 0$ such that $\eta=D_i/(2H_i)$ for all $1\leq i\leq n$. Under this assumption, it has been proved \cite{Liu2024} that the $k$-saddles of system \eqref{eq:delomg} correspond exactly to the $k$-saddles of the reduced system
    \begin{equation}\label{eq:del}
        \left\{
            \begin{aligned}
                \dot \delta_1&=\frac{\pi f_{\mathrm{n}}}{H_1}\left(P_{\mathrm{m}1}-P_{\mathrm{e}1}(\delta)\right)\\
                &\dots\\
                \dot \delta_n&=\frac{\pi f_{\mathrm{n}}}{H_n}\left(P_{\mathrm{m}n}-P_{\mathrm{e}n}(\delta)\right)
            \end{aligned}
        \right.
    \end{equation}
    i.e., $(\delta_*,0)$ is a $k$-saddle of \eqref{eq:delomg} if and only if $\delta_*$ is a $k$-saddle of \eqref{eq:del}. Furthermore, it is easy to see that the eigen-directions of $(\delta_*,0)$ are completely determined by those of $\delta_*$. Indeed, let $J$ be the Jacobian matrix of \eqref{eq:del} at $\delta_*$, and let \eqref{eq:delomg} have the left eigen-direction
    \begin{equation}\label{eq:muxiw}
        \begin{pmatrix}
            0 & J^\top \\ I & -\eta
        \end{pmatrix}
        \binom{\xi}{w}=\mu\binom{\xi}{w},
    \end{equation}
    then $J^\top w=\mu(\eta+\mu)w$ and $\xi=(\eta+\mu)w$, so $w$ is a left eigen-direction of \eqref{eq:del} with respect to $\mu(\eta+\mu)$. Conversely, suppose $Jw=\la w$, choose $\mu\in \m C$ satisfying $\mu(\eta+\mu)=\la$ and set $\xi=(\eta+\mu)w$, then \eqref{eq:muxiw} also holds. Thus, if $\delta_*$ is a $1$-saddle of the reduced system, the tangent spaces of the stable manifolds of the original and reduced systems satisfy
    \[
        T_{(\delta_*,0)}W^s(\delta_*,0)=\binom{(\eta+\mu)w}{w}^\perp.
    \]
    This shows that understanding the boundary of the region of attraction for the reduced system at $\delta=0$ will greatly enhance our understanding of the boundary of the region of attraction for the original system at $\delta=\om=0$. Moreover, numerical experiments have shown \cite{Liu2024} that for small $n$, the critical clearing time of the reduced system is close to that of the original system. Based on these two reasons, this chapter focuses on the numerical computation of the boundary of the region of attraction for the reduced system \eqref{eq:del} at the asymptotically stable\footnote{Both systems are assumed to be hyperbolic.} equilibrium point $\delta=0$.
\section{Stable Manifolds and the Domain of Attraction Boundary}
\label{sec:SMDOA}
Under certain hyperbolicity and transversality conditions, the domain of attraction boundary is precisely the union of the stable manifolds of the unstable equilibrium points and unstable periodic orbits on it — a geometric phenomenon first proposed by Chiang Hsiao-Dong in his 1988 paper \cite{Chiang1988}, later corrected by Michael W. Fisher et al., who identified a flaw in the lemma and strengthened the conditions while preserving the conclusion \cite{COSR2,HCRA}. This section briefly presents the corrected version, which will serve as the fundamental assumption for the numerical examples that follow.

If $p$ is an equilibrium point, the set
    \[
        W^s(p):=\left\{x\in \m R^n:\lim_{t\to +\infty} \phi(t,x)=p\right\}
    \]
is called the stable manifold of $p$. The tangent space of the stable manifold has the following properties. Suppose $p$ is a hyperbolic $k$-saddle point. For $s_1+2s_2=n-k$,
    \begin{gather*}
        \la_1,\dots,\la_{s_1}<0,\\
        \mu_1,\conj{\mu_1},\dots,\mu_{s_2},\conj{\mu_{s_2}}\in \m C-\m R
    \end{gather*}
    are all eigenvalues of $Df(p)$ with negative real parts, with corresponding eigenvectors
    \begin{gather*}
        Df(p)u_i=\la_i u_i,\quad u_i\in \m R^n-0,\quad \forall 1\leq i\leq s_1,\\
        Df(p)v_j=\mu_j v_j,\quad v_j\in \m C^n-\m R^n,\quad  \forall 1\leq j\leq s_2.
    \end{gather*}
    For $k_1+2k_2=k$,
    \begin{gather*}
        \al_1,\dots,\al_{k_1}>0,\\
        \beta_1,\conj{\beta_1},\dots,\beta_{k_2},\conj{\beta_{k_2}}\in \m C-\m R
    \end{gather*}
    are all eigenvalues of $Df(p)$ with positive real parts, with corresponding eigenvectors
    \begin{gather*}
        Df(p)w_i=\al_i w_i,\quad w_i\in \m R^n-0,\quad \forall 1\leq i\leq k_1,\\
        Df(p) \xi_j=\beta_j \xi_j,\quad \xi_j\in \m C^n-\m R^n,\quad  \forall 1\leq j\leq k_2.
    \end{gather*}

    Then there exists an $(n-k)$-dimensional differentially embedded submanifold $S\subset \m R^n$ such that $p\in S$,
    \[
        T_pS=\Span{u_1,\dots,u_{s_1}, \re v_1, \im v_1,\dots,\re v_{s_2},\dots, \im v_{s_2}},
    \]
    and for any $t\geq 0$, $\phi(t,S)\subset S$,
    \[
        \lim_{s\to +\infty} \phi(s,q)=p,\quad \forall q\in S.
    \]

    Similarly, there exists a $k$-dimensional differentially embedded submanifold $U\subset \m R^n$ such that $p\in U$,
    \[
        T_pU=\Span{w_1,\dots,w_{k_1}, \re \xi_1, \im \xi_1,\dots,\re \xi_{k_2},\dots, \im \xi_{k_2}}.
    \]
    and for any $t\leq 0$, $\phi(t,U)\subset U$,
    \[
        \lim_{s\to -\infty} \phi(s,q)=p,\quad \forall q\in U.
    \]
    Here span denotes the real vector space spanned by the vectors.

    Michael W. Fisher et al. showed that under the following assumptions, the domain of attraction boundary of an asymptotically stable equilibrium point is formed by the stable manifolds of the critical elements on it \cite{HCRA}.
    \begin{itemize}
    \item[\textbf{A1}] There exists an open neighborhood $N$ of $\p \ma A(p)$ and a natural number $k$ such that
    \[
        \Om(f)\cap N=\left\{X_i: 1\leq i\leq k\right\},
    \]
    where $X_i$ are critical elements.
    \item[\textbf{A2}] For any $q\in \p \ma A(p)$, its forward time flow $\{\phi(t,q):t\geq 0\}$ is bounded.
    \item[\textbf{A3}] All critical elements on $\p \ma A(p)$ are hyperbolic, where critical elements refer only to saddle points and periodic orbits.
    \item[\textbf{A4}] For any two critical elements $Y_i, Y_j$ (possibly the same) on $\p \ma A(p)$, the stable manifold $W^s\left(Y_i\right)$ and the unstable manifold $W^u\left(Y_j\right)$ intersect transversely.
\end{itemize}
Transverse intersection is defined as follows. Let $M$ be a smooth manifold, $S_1,S_2\subset M$ two immersed submanifolds, and $i_1:S_1\into M,\; i_2:S_2\into M$ the inclusion maps. $S_1$ and $S_2$ are said to intersect transversely at $p\in S_1\cap S_2$ if the tangent spaces satisfy
    \[
        \left(i_1\right)_*\left(T_pS_1\right)+\left(i_2\right)_*\left(T_pS_2\right)=T_pM,
    \]
    where $\left(i_1\right)_*\left(T_pS_1\right)$ denotes the image of $T_pS_1$ in $T_pM$. If they intersect transversely at every $p\in S_1\cap S_2$, then $S_1$ and $S_2$ are said to intersect transversely.

Systems satisfying assumptions \textbf{A1}--\textbf{A4} have the following property \cite{HCRA}.
\begin{theorem}\label{thm:bdary}
    Let $M$ be a compact Riemannian manifold or Euclidean space $\m R^n$, $f$ a complete $C^1$ vector field on it, and $p$ a stable equilibrium point of $f$. If \textbf{A1}--\textbf{A4} hold, then
    \[
        \p \ma A(p)=\bigcup_{i\in I} W^s\left(Y_i\right),
    \]
    where $\left\{Y_i:i\in I\right\}$ are all critical elements on $\p \ma A(p)$.
\end{theorem}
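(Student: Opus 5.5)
The proof splits into the two inclusions, and I handle each separately.

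\emph{The inclusion $\p\ma A(p)\subset\bigcup_{i\in I}W^s(Y_i)$.} Take $q\in\p\ma A(p)$. Since $\ma A(p)$ is open and invariant, its boundary is closed and invariant. By \textbf{A2} the forward orbit of $q$ is bounded; if $M$ is compact this is automatic. Hence the $\om$-limit set $\om(q)$ is nonempty, compact, connected and invariant, and it lies in $\p\ma A(p)$. Points of $\om(q)$ are nonwandering, so $\om(q)\subset\Om(f)\cap N$, which by \textbf{A1} is a finite union of critical elements. These critical elements are pairwise disjoint compact sets, and $\om(q)$ is connected. Therefore $\om(q)$ lies in a single critical element $Y$. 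If $Y$ is a periodic orbit, minimality gives $\om(q)=Y$. By \textbf{A3} the element $Y$ is hyperbolic. For a hyperbolic equilibrium or periodic orbit, the local stable manifold theorem recalled above (for periodic orbits, applied to a Poincar\'e map) shows that an orbit accumulating on $Y$ eventually enters the local stable manifold. Hence $q\in W^s(Y)$ with $Y\subset\p\ma A(p)$.

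\emph{The inclusion $W^s(Y_i)\subset\p\ma A(p)$ for every critical element $Y_i\subset\p\ma A(p)$.} First, $W^s(Y_i)\cap\ma A(p)=\emptyset$, since points of $\ma A(p)$ converge to $p\neq Y_i$. So it suffices to show $W^s(Y_i)\subset\cl{\ma A(p)}$. Because $\cl{\ma A(p)}$ is invariant, it is enough to show $W^s_{loc}(Y_i)\subset\cl{\ma A(p)}$: any point of $W^s(Y_i)$ is carried by the flow into the local stable manifold, and invariance then pulls the conclusion back. I will prove the local statement with the $\lambda$-lemma (inclination lemma) together with transversality \textbf{A4}. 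Since $Y_i\subset\p\ma A(p)$, there are points $x_k\in\ma A(p)$ with $x_k\to Y_i$.

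\emph{Main obstacle.} The hard step is to show that $\ma A(p)$ accumulates on all of $W^s_{loc}(Y_i)$, not just on $Y_i$ itself. This is exactly where Chiang--Hirsch--Wu's lemma was flawed. I would argue in three steps.
\begin{enumerate}
\item Show that $W^u(Y_i)\setminus Y_i$ meets $\cl{\ma A(p)}$; more strongly, find $j$ such that $W^u(Y_i)$ meets $\ma A(p)$ or meets $W^s(Y_j)$ with $Y_j$ again on the boundary. This uses points of $\ma A(p)$ near $Y_i$ that must leave a neighborhood of $Y_i$ close to $W^u_{loc}(Y_i)$.
\item Order the finitely many critical elements on the boundary by a filtration. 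Transversality \textbf{A4} excludes cycles, so this ordering is possible, and I proceed by induction starting from elements whose unstable manifolds meet $\ma A(p)$.
\item In the base case there is a point $z\in W^u(Y_i)\cap\ma A(p)$. Take a small disk $D$ of dimension $\dim W^s(Y_i)$ through $z$, transverse to $W^u(Y_i)$ and contained in the open set $\ma A(p)$. By the backward-time $\lambda$-lemma, the backward images $\phi(-t,D)$ accumulate $C^1$ on $W^s_{loc}(Y_i)$. These images remain in $\ma A(p)$ by invariance, which gives $W^s_{loc}(Y_i)\subset\cl{\ma A(p)}$.
\end{enumerate}

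For the inductive step, suppose $W^u(Y_i)$ meets $W^s(Y_j)$ transversely, with $W^s(Y_j)\subset\cl{\ma A(p)}$ already established. Then the $\lambda$-lemma applied near $Y_j$ transfers disks in $\ma A(p)$ accumulating on $W^s(Y_j)$ into disks accumulating on $W^s_{loc}(Y_i)$. I expect the careful verification of step 1 and of the no-cycle filtration to be the technical core.
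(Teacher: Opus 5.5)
The paper gives no proof of this theorem; it imports the result from Fisher et al.\ \cite{HCRA}, so your sketch has to stand on its own. Your first inclusion is fine, and so is your base case (backward $\lambda$-lemma from a point of $W^u(Y_i)\cap\mathcal{A}(p)$). The genuine gap is Step~2: \textbf{A4} does \emph{not} exclude cycles once periodic orbits are allowed. A transverse connection from $X$ to $Y$ at a non-critical point forces $\dim W^u(X)+\dim W^s(Y)\ge n+1$. For equilibria this makes the index drop strictly. A hyperbolic periodic orbit, however, has $\dim W^u(\gamma)+\dim W^s(\gamma)=n+1$, so connections between periodic orbits of equal index are perfectly compatible with \textbf{A4}; transverse homoclinic orbits (the horseshoe situation) are the basic example. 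Your chain $Y_i\to Y_j\to\cdots$ from Step~1 can then cycle, and the induction has no well-founded order.

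What rules this out is \textbf{A1}, not \textbf{A4}. If $W^u(Y)\cap\mathcal{A}(p)=\emptyset$, the exit-point argument yields $z\in(W^u(Y)\setminus Y)\cap\partial\mathcal{A}(p)$. By your first inclusion, $z\in W^s(Y')$ for a boundary element $Y'$, so every connecting point of the chain lies on $\partial\mathcal{A}(p)\subset N$. Suppose the chain revisits an element. Applying the $\lambda$-lemma successively around the resulting cycle shows that each connecting point is nonwandering. Yet it lies on no critical element, since its $\alpha$-limit is $Y$ but it is not on $Y$. This contradicts \textbf{A1}. Since \textbf{A1} also makes the set of boundary critical elements finite, the chain must stop at some $Y_m$ with $W^u(Y_m)\cap\mathcal{A}(p)\neq\emptyset$.

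Second, your inductive hypothesis $W^s(Y_j)\subset\overline{\mathcal{A}(p)}$ is too weak to propagate. Pointwise accumulation of $\mathcal{A}(p)$ on $W^s(Y_j)$ gives no disk inside $\mathcal{A}(p)$ crossing $W^u(Y_i)$, and such a disk is what the $\lambda$-lemma needs. Carry instead the statement $W^u(Y_j)\cap\mathcal{A}(p)\neq\emptyset$, and pass it back along the chain with the \emph{forward} $\lambda$-lemma. Take a small disk in $W^u(Y_{m-1})$ through the connecting point, transverse to $W^s(Y_m)$ by \textbf{A4}. Its forward images are $C^1$-close to compact pieces of $W^u(Y_m)$, so they meet the open set $\mathcal{A}(p)$ while staying inside $W^u(Y_{m-1})$. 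Iterating gives $W^u(Y_i)\cap\mathcal{A}(p)\neq\emptyset$ for every boundary critical element, and your base case then yields $W^s(Y_i)\subset\partial\mathcal{A}(p)$. With these two repairs the argument becomes a complete proof along the classical Chiang--Hirsch--Wu lines.
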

From Theorem \ref{thm:bdary}, computing the domain of attraction boundary reduces to computing the stable manifolds of all (hyperbolic) critical elements on it. Moreover the stable manifold of a critical element with index $k$, as an immersed submanifold of $\m R^n$, has dimension $n-k$. Intuitively, similar to the relationship between a plane and a line, the highest-dimensional manifold will occupy the entire area (measure), while lower-dimensional manifolds have measure zero; therefore, it suffices to compute only the stable manifolds of critical elements with index $1$. Below we introduce a new assumption \textbf{A5} and prove that under this assumption, the above intuition holds topologically.
\begin{itemize}
    \item[\textbf{A5}] There exists a homeomorphism $h:\m R^n\to \m R^n$ such that $M:=h(\p A(p))$ is an $(n-1)$-dimensional embedded submanifold of $\m R^n$ and for every critical element $Y_i$ on $\p \ma A(p)$,
    \[
        M_i:=h\left(W^s\left(Y_i\right)\right)
    \]
    is still an immersed submanifold of $\m R^n$ of the same dimension as $W^s\left(Y_i\right)$.
\end{itemize}
To prove Theorem \ref{thm:ind1bdary}, we need some preliminaries on null sets in manifolds. A subset $A$ of $\m R^n$ is called a null set if it is Lebesgue measurable and its Lebesgue measure is zero. Equivalently, for any $\ep>0$, there exist countably many open rectangles whose union contains $A$ and the sum of their volumes is less than $\ep$. A subset $S$ of an $n$-dimensional manifold $M$ is called a null set if for every chart $(U,\vphi)$ of $M$, $\vphi(S\cap U)$ is a null set in $\m R^n$. It is easy to see that a countable union of null sets in a manifold is again a null set.
\begin{theorem}\label{thm:ind1bdary}
    With the same notation as in Theorem \ref{thm:bdary} and assuming conditions \textbf{A1}--\textbf{A5} hold, then\footnote{In this paper, $\cl{A}$ denotes the closure of $A$, and $\conj{A}$ denotes the set $\{\conj z: z\in A\}$. When using the latter, it is assumed that $A\subset \m C$.}
    \[
        \p \ma A(p)=\cl{\bigcup_{i\in I_1} W^s\left(Y_i\right)},
    \]
    where $I_1=\{i\in I: Y_i \text{ has index } 1\}$. That is, $\p \ma A(p)$ is the closure in $\m R^n$ of the union of the stable manifolds of the critical elements with index $1$ on it.
\end{theorem}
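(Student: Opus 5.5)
By Theorem \ref{thm:bdary}, $\p \ma A(p)=\bigcup_{i\in I}W^s(Y_i)$. Hence $\bigcup_{i\in I_1}W^s(Y_i)\subset \p\ma A(p)$, and since the boundary of an open set is closed, the inclusion $\cl{\bigcup_{i\in I_1} W^s(Y_i)}\subset\p\ma A(p)$ is immediate. It remains to prove the reverse inclusion. For this I would show that the higher-index part $B:=\bigcup_{i\in I\setminus I_1}W^s(Y_i)$ has empty interior relative to $\p\ma A(p)$.

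First, note that $p$ itself is not on the boundary, so no critical element on $\p\ma A(p)$ has index $0$. Indeed, an index-$0$ critical element is attracting and its stable manifold is open. That open set would contain points of $\ma A(p)$, whose trajectories cannot converge to a different attractor. So every $Y_i$ with $i\notin I_1$ has index $k\ge2$. By \textbf{A1} there are finitely many critical elements, so $I$ is finite.

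Next I transport the problem through the homeomorphism $h$ of \textbf{A5}. Since $h$ is a homeomorphism, it suffices to prove that $h(B)=\bigcup_{i\notin I_1}M_i$ has empty interior in $M=h(\p\ma A(p))$. Each $M_i$ with $i\notin I_1$ is an immersed submanifold of dimension $n-k\le n-2$ lying inside the $(n-1)$-manifold $M$.

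\textbf{Key step.} I need that an immersed submanifold of dimension at most $n-2$ contained in $M$ is a null set of $M$. An immersed manifold is second countable, so it is a countable union of embedded coordinate patches, each the image of a $C^1$ map from an open subset of $\m R^{n-k}$. Composing with a chart of $M$ gives the following situation. The inclusion $M_i\into\m R^n$ is smooth and lands in the embedded submanifold $M$, so it is smooth as a map into $M$; this is the standard fact for embedded targets. Hence in a chart $\vphi$ of $M$, each patch becomes the image of a $C^1$ map $\m R^{n-k}\supset V\to\m R^{n-1}$ with $n-k<n-1$. By the mini-Sard lemma (a $C^1$ image of a lower-dimensional set has measure zero), this image is a null set. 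A countable union of null sets is null, so $h(B)$ is a null set in $M$. A null set in an $(n-1)$-manifold contains no nonempty open subset, since nonempty open sets in charts have positive measure. Thus $h(B)$ has empty interior in $M$, and therefore $B$ has empty interior in $\p\ma A(p)$.

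\textbf{Conclusion.} Let $x\in\p\ma A(p)$ and let $U$ be any neighborhood of $x$. Then $U\cap\p\ma A(p)$ is a nonempty relatively open set. It is not contained in $B$, so it meets $\bigcup_{i\in I_1}W^s(Y_i)$. Therefore $x$ lies in the closure of that union, which gives $\p\ma A(p)\subset\cl{\bigcup_{i\in I_1} W^s(Y_i)}$.

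I expect the main obstacle to be the key step. It needs two things: the justification that the immersed $M_i$ maps smoothly into $M$, and care that \textbf{A5} only gives $M_i$ as an abstract immersed submanifold rather than as the image of $W^s(Y_i)$ under a smooth map. I would handle this by working entirely with the immersed structure that \textbf{A5} provides on $M_i$ and never differentiating $h$.
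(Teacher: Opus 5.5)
Your proposal is correct and follows essentially the same route as the paper. Both transport the boundary through $h$ to the $(n-1)$-manifold $M$, then note that the images $M_i$ of the higher-index stable manifolds are immersed submanifolds of $M$ of dimension less than $n-1$. These are therefore null sets (the paper cites Lee's Corollary 6.11, which plays the role of your mini-Sard step), so their finite union cannot contain a nonempty relatively open subset of $M$.

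Your version is slightly more complete. You prove the easy inclusion explicitly, and you rule out index-$0$ critical elements on the boundary. The paper uses both facts without stating them when it asserts $\dim M_i<n-1$ for $i\notin I_1$.
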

\begin{proof}
    Suppose not. Then there exists an open set $U$ in the subspace topology of $\p \ma A(p)$ induced from $\m R^n$ such that
    \[
        U\cap \bigcup_{i\in I_1} W^s\left(Y_i\right)=\emptyset,
    \]
    hence by Theorem \ref{thm:bdary},
    \[
        U=\bigcup_{i\in I-I_1} U\cap W^s\left(Y_i\right).
    \]
    Since $h$ is a homeomorphism,
    \[
        h(U)=\bigcup_{i\in I-I_1} h(U)\cap M_i.
    \]
    is an open set in $M:=\p \ma A(p)$ with the subspace topology from $\m R^n$, where $M_i:=h\left(W^s\left(Y_i\right)\right)$.

    On one hand, by assumption \textbf{A5}, $M$ is an embedded submanifold of $\m R^n$, so the subspace topology induced from $\m R^n$ coincides with the manifold topology of $M$. Therefore, there exists a chart $(V,\psi)$ such that $V\subset h(U)$. Since $\psi(V)$ has positive Lebesgue measure in $\m R^{n-1}$, $h(U)$ is not a null set.

    On the other hand, by assumption \textbf{A5}, $M_i$ is an immersed submanifold of $\m R^n$ and therefore also an immersed submanifold of $M$. When $i\notin I_1$, $\dim M_i<n-1$, so by Corollary 6.11 of \cite{Lee2013}, $M_i$ is a null set. Consequently, $h(U)\cap M_i$ is also a null set. Since $h(U)$ is the union of finitely many null sets, it is also a null set, a contradiction.
\end{proof}
\section{Numerical Algorithms for the Domain of Attraction Boundary}
\label{sec:NADOAB}
\subsection{1-Saddle Points and Their Stable Manifolds}
To solve for a 1-saddle point of the system $\dot x=f(x)$, intuitively, if we reverse the component in the direction of the eigenvector corresponding to the eigenvalue with the largest real part at each point of $\m R^n$ while keeping the components in other directions unchanged, then, since the eigen-direction corresponding to the largest real part eigenvalue is precisely the unique unstable direction of a 1-saddle point, one might expect that the 1-saddle point of $f$ becomes a stable equilibrium point of the transformed new system, having only stable directions. However, this is not feasible because if we could compute the above quantities at each point, we would have already computed all equilibrium points and their indices.

This is the basic idea of the GAD method in \cite{GAD} proposed by Weinan E et al. The key is that the direction of the eigenvector corresponding to the largest real part eigenvalue does not need to be directly computed; instead, it should automatically “flow out” over time, which requires the augmented system
\begin{equation}\label{eq:gad}
    \begin{aligned}
        \dot x&=f(x)-2\dfrac{\ip{f(x),w}}{\ip{w,v}}\cdot v\\
        \dot v&=Df(x) v-\al(x,v)\cdot v\\
        \dot w&=Df(x)^\top w-\beta(x,v,w)\cdot w,
    \end{aligned}
\end{equation}
where $x,v,w: \m R\to \m R^n$, $\ip{\cdot,\cdot}$ is the standard inner product, and
\begin{equation}\label{eq:albeta}
    \begin{aligned}
        \al(x,v)&=\ip{v,Df(x)v},\\
        \beta(x,v,w)&=2\ip{w,Df(x)v}-\al(x,v).
    \end{aligned}
\end{equation}
We have the following theorem \cite{GAD}.
\begin{theorem}\label{thm:gad}
    Assume $p$ is a 1-saddle point of the system $\dot x=f(x)$, $Df(p)$ has $n$ distinct eigenvalues $\la_1,\dots,\la_n\in \m C$ with
    \[
        \la_1>0>\re \la_i,\quad \forall i\geq 2, 
    \]
    and vectors $v_1,w_1\in \m R^n-0$ are the eigenvectors of $Df(p)$ and $Df(p)^\top$ corresponding to $\la_1$, respectively, satisfying $\ip{v_1,v_1}=\ip{v_1,w_1}=1$. Then $\left(p,v_1,w_1\right)$ is an asymptotically stable equilibrium point of system \eqref{eq:gad}.

    Conversely, assume $\left(p,v^*,w^*\right)$ is a stable equilibrium point of system \eqref{eq:gad} (hence $p,v^*,w^*$ are all real vectors) satisfying $\ip{v^*,v^*}=\ip{v^*,w^*}=1$. Also assume $Df(p)$ has $n$ distinct eigenvalues $\la_1,\dots,\la_n\in \m C$ with non-zero real parts. Then, after appropriate ordering,
    \[
        \la_1>0>\re \la_i,\quad \forall i\geq 2, 
    \]
    and $Df(p)v^*=\la_1 v^*,\; Df(p)^\top w^*=\la_1 w^*$.
\end{theorem}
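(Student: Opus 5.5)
The plan is to linearize the augmented system \eqref{eq:gad} at the equilibrium and show that its Jacobian is block lower triangular in the ordering $(x,v,w)$. The spectrum then splits into the spectra of three $n\times n$ diagonal blocks, and each block can be diagonalized using the eigenbasis of $A:=Df(p)$. Since the eigenvalues of $A$ are distinct, there are right eigenvectors $v_i$ and left eigenvectors $w_i$ with $\ip{w_i,v_j}=0$ for $i\neq j$. This biorthogonality is used throughout.

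\emph{Forward direction.} At $(p,v_1,w_1)$ we have $f(p)=0$, so every term multiplying $f(x)$ vanishes to first order. The $x$-equation linearizes to $\dot{\delta x}=(A-2\la_1 v_1w_1^\top)\,\delta x$, using $w_1^\top A=\la_1 w_1^\top$ and $\ip{w_1,v_1}=1$, and this involves neither $\delta v$ nor $\delta w$. On $v_1$ this matrix acts as $-\la_1$, and on $v_i$ ($i\ge2$) as $\la_i$ because $w_1^\top v_i=0$, so the block is stable.

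The $v$-equation depends only on $(x,v)$. Its $\delta v$-block is
\[
B=A-\la_1 I-v_1\left(A^\top v_1+A v_1\right)^\top ,
\]
using $\al(p,v_1)=\la_1$. We have $Bv_1=-2\la_1 v_1$. Since $B-(A-\la_1 I)$ has range $\Span{v_1}$, the operator induced on $\m R^n/\Span{v_1}$ coincides with that of $A-\la_1I$. Its eigenvalues are therefore $\la_i-\la_1$ for $i\ge2$, all with negative real part.

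Likewise, the $\delta w$-block of the $w$-equation is
\[
C=A^\top-\la_1 I-2\la_1 w_1v_1^\top ,
\]
where $\beta=\la_1$ and $\p_w\beta=2(Av_1)^\top=2\la_1 v_1^\top$. This gives $Cw_1=-2\la_1w_1$, and on the quotient by $\Span{w_1}$ the eigenvalues are again $\la_i-\la_1$. All eigenvalues of the full Jacobian thus have negative real part, and the linearization theorem gives asymptotic stability.

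\emph{Converse.} Setting the right-hand side to zero, the first equation gives $f(p)=c\,v^*$ with $c=2\ip{f(p),w^*}$. Pairing with $w^*$ yields $\ip{f(p),w^*}=c$, so $c=2c$, hence $c=0$ and $f(p)=0$. The second and third equations then say $Av^*=\al v^*$ and $A^\top w^*=\beta w^*$. Since $\beta=2\ip{w^*,Av^*}-\al=2\al-\al=\al$, both vectors belong to the same real eigenvalue $\la_k$. Repeating the Jacobian computation with $\la_k$ in place of $\la_1$ gives the spectrum
\[
\{-\la_k\}\cup\{\la_i\}_{i\neq k}\cup\{-2\la_k\}\cup\{\la_i-\la_k\}_{i\neq k}
\]
(the last two sets each appearing twice).

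The step needing care is passing from ``stable'' to a spectral condition, since stability need not be hyperbolic. I would invoke the instability part of Lyapunov's linearization theorem: an eigenvalue with positive real part forces instability. Hence all real parts are $\le0$. Because the $\la_i$ have nonzero real parts, this forces $\la_k>0$ (from $-\la_k\le 0$) and $\re\la_i<0$ for $i\neq k$. Relabelling $k$ as $1$ gives the claim.
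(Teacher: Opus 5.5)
Your proof is correct, and since the paper gives no proof of its own (it quotes the theorem from E and Zhou's GAD work), there is nothing to diverge from. Your argument is the standard linearization proof. The Jacobian at the equilibrium is block lower triangular in the ordering $(x,v,w)$, with diagonal blocks $A-2\lambda_k v^*w^{*\top}$, $A-\lambda_k I-v^*v^{*\top}(A+A^\top)$ and $A^\top-\lambda_k I-2\lambda_k w^*v^{*\top}$, and you read off their spectra from the biorthogonal eigenbasis; I have checked these blocks and spectra and they are right.

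Using the instability half of Lyapunov's indirect method is exactly what converts the hypothesis ``stable'' (rather than ``linearly stable'') into the spectral condition. Both halves of the linearization theorem tacitly need $f\in C^2$, so that the augmented vector field is $C^1$.
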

Hence the tangent space of the stable manifold at $p$ is
\[
    T_pW^s(p)=\Span{\re v_i, \im v_i: i>1}=\Span{w_1}^\perp,
\]
i.e., the orthogonal complement of $w_1$ in $\m R^n$.

Based on this, the numerical algorithm used in this paper to compute 1-saddle points and their stable manifolds is described by table \ref{tab:1saddleGAD}
\begin{table}[!htbp]
    \caption{\enspace GAD algorithm for 1-saddle points and stable manifolds.}
    \label{tab:1saddleGAD}
    \centering
    \begin{tabularx}{\textwidth}{>{\centering\arraybackslash}m{1.3cm}X}
        \toprule[1.5pt] 
        \textbf{S1} & Take several points appropriately on a small sphere $\p B(x^*,\delta)$ centered at the asymptotically stable equilibrium point $x^*$. Integrate along the reversed vector field $-f$ for a long time starting from these points, and denote the endpoints as $x_1,\dots,x_m$.\\
        \midrule
        \textbf{S2} & For each $x_i$, appropriately select initial unit directions $v_i^0,\,w_i^0$ (e.g., $v_i^0=w_i^0=(x_i-x^*)/\|x_i-x^*\|$). Integrate along the vector field \eqref{eq:gad} for a long time, and denote the converged endpoints as $(p_j,v_j,w_j)$.\\
        \midrule
        \textbf{S3} & Remove duplicates from all $(p_j,v_j,w_j)$. For each $j$, uniformly sample points $\xi_{j,1},\dots,\xi_{j,N}$ on $\p B(0,\delta)\cap w_j^\perp$.\\
        \midrule
        \textbf{S4} & For each $j$ and $1\leq k\leq N$, integrate along the vector field $f$ starting from $p_j+\xi_{j,k}$ until convergence. \\
        \midrule
        \textbf{S5} & Observe whether the integral curves in \textbf{S4} enclose $x^*$. If not, repeat step \textbf{S1}. \\
        \bottomrule[1.5pt]
    \end{tabularx}
\end{table}
\subsection{Periodic Orbit Localization Algorithm}
Unlike 1-saddle points, which only require computing their position and unstable eigendirection, a periodic orbit of index one also requires computing its period $T$. Therefore, this paper adopts a two-step approach to numerically solve for periodic orbits: the first step is to compute the period and the positions of points on it, and the second step is to compute its unstable eigendirection using the GAD method similar to that of the previous subsection. This subsection introduces the adjoint operator method \cite{CPOH} used for the first step, which can transform a periodic orbit of any index into a local minimum point of a numerical algorithm without requiring foreseeing the period.

A smooth map $x:\m R\to \m R^n$ is a $T$-periodic orbit if
\[
    f(x)-\om\df{x}{\tau}=0, 
\]
where $\om:=1/T$ is the frequency and $\tau:=\om t$ is the scaled time parameter. Denote the extended variable $X:=(x;\om)$. The goal of the numerical algorithm is to minimize the residual
\[
    J[X]:=\ip{r[X],r[X]}=\int_0^1 \left(f(x)-\om\df{x}{\tau}\right)^2\,d\tau,
\]
where
\[
    r[X]=r\left(X,\df{X}{\tau}\right)=\binom{f(x)-\om\df{x}{\tau}}{0}.
\]
Introducing an artificial evolution time $s$, i.e., $x=x(s,\tau),\;\om=\om(s)$, and assuming the evolution equation for $x$ is
\[
    \pf{X}{s}(s,\tau)=g[X](s,\tau),
\]
then
\[
    \df{J[X]}{s}=2\ip{\pf{}{s}r\left(X,\pf{X}{\tau}\right),r}=2\ip{A(X,g),r},
\]
where the directional derivative $A(X,g)$ is
\[
    A(X,Y):=\left.\df{}{\ep}\right|_{\ep=0}r[X+\ep g]=\binom{Df(x)y_1-y_2\pf{x}{\tau}-\om\pf{y_1}{\tau}}{0},
\]
for any $Y=(y_1;y_2), y_1\in \m R^n, y_2\in \m R$. Note that $A(X,Y)$ is a linear operator in $Y$, so there exists an adjoint operator $A^*(X,\cdot)$ satisfying
\[
    \ip{A(X,Y),Z}=\ip{Y,A^*(X,Z)},\quad\forall Z=\binom{z_1}{z_2}\in \m R^n\times \m R.
\]
In fact,
\begin{align*}
   \ip{A(X,Y),Z}
    &=\int_0^1 \left(Df(x)y_1-y_2\pf{x}{\tau}-\om\pf{y_1}{\tau}\right)\cdot z_1\,d\tau\\
    &=\int_0^1 \left(Df(x)^\top z_1+\om \pf{z_1}{\tau} \right)\cdot y_1\,d\tau\\
    &\quad -y_2\int_0^1 \pf{x}{\tau}\cdot z_1\,d\tau
\end{align*}
thus
\[
    A^*(X,Z)=\binom{Df(x)^\top z_1+\om \pf{z_1}{\tau}}{-\dint_0^1 \pf{x}{\tau}\cdot z_1\,d\tau}.
\]
Then we have
\[
    \df{J[X]}{\tau}=2\ip{A(X,g),r}=2\ip{g,A^*(X,r)}.
\]
Choose
\[
    g[X]:=-A^*(X,r)=\begin{pmatrix}-Df(x)^\top f(x)+\om\left(Df(x)^\top-Df(x)\right)\pf{x}{\tau}+\om^2\dfrac{\p^2 x}{\p \tau^2}\\
        \displaystyle\oint f(x)\cdot dx-\om\int_0^1\left(\pf{x}{\tau}\right)^2\,d\tau\end{pmatrix},
\]
where the loop integral is taken around $x$ itself, then $dJ/d\tau=-2\|g\|^2\leq 0$.

Taking the normalized time $\tau=t/T$ and a truncated Fourier expansion
\[
    x(s,\tau)=\frac{a_0(s)}{2}+\sum_{k=1}^{N}\left(a_k(s)\cos(2\pi k\tau)+b_k(s)\sin(2\pi k\tau)\right).
\]
Fix $m\geq 1$, denote $\tau_j=j/m,\, x_j=x(s,\tau_j),\,f_j=f(x_j)$ and
\[
    Df_j=Df(x_j),\quad P_j = -Df_j^\top f_j + \omega (Df_j^\top-Df_j)\,\p_\tau x_j,
\]
then the coefficients and frequency satisfy the equations
\begin{equation}\label{eq:AdjointPeriod}
    \begin{aligned}
        \frac{d a_0}{ds} &= \frac{2}{m}\sum_{j=0}^{m-1} P_j,\\
        \frac{d a_k}{ds} &= \frac{2}{m}\sum_{j=0}^{m-1} P_j\cos(2\pi k\tau_j) \;-\; \omega^2 (2\pi k)^2 a_k,\\
        \frac{d b_k}{ds} &= \frac{2}{m}\sum_{j=0}^{m-1} P_j\sin(2\pi k\tau_j) \;-\; \omega^2 (2\pi k)^2 b_k,\\
        \df{\om}{s}&=\frac{1}{m}\sum_{j=0}^{m-1} f_j\cdot \p_\tau x_j-\frac{\om}{2}\sum_{k=1}^N (2\pi k)^2(a_k^2+b_k^2).
    \end{aligned}
\end{equation}
Based on this, the numerical algorithm for locating periodic orbits used in this paper is:
\begin{table}[!htbp]
    \caption{\enspace Periodic orbit localization algorithm.}
    \label{tab:AdjointPeriod}
    \centering
    \begin{tabularx}{\textwidth}{>{\centering\arraybackslash}m{1.3cm}X}
        \toprule[1.5pt] 
        \textbf{S1} & Guess the initial frequency $\om_0$ and initial shape coefficients $a_k^0,b_k^0$ of the periodic orbit.\\
        \midrule
        \textbf{S2} & Starting from the initial values $a_k^0,b_k^0,\om_0$, integrate along the vector field \eqref{eq:AdjointPeriod} for a long time and observe convergence. If not, repeat step \textbf{S1}.\\
        \bottomrule[1.5pt]
    \end{tabularx}
\end{table}
\subsection{Numerical Algorithm for the Stable Manifold of a Periodic Orbit with One Unstable Direction}
Using the algorithm from the previous subsection, assume we have already located the $T$-periodic orbit $\ga$ with index $1$. This subsection uses the GAD method to compute its stable manifold. To reduce the dimension of the subsequent sampling algorithm, we introduce a simplified version of the GAD method \cite{SGAD}. Consider the two augmented systems derived from $\dot x=f(x)$:
\begin{subequations}
    \begin{align}
        \dot x&=f(x)-2\dfrac{\ip{f(x),v}}{\ip{v,v}}\cdot v\\
        \dot v&=Df(x) v-\ip{v,Df(x)v}\cdot v \label{eq:gadxv}
    \end{align}
\end{subequations}
and
\begin{subequations}
    \begin{align}
        \dot x&=f(x)-2\dfrac{\ip{f(x),w}}{\ip{w,w}}\cdot w\\
        \dot w&=Df(x)^\top w-\ip{w,Df(x)^\top w}\cdot w \label{eq:gadxw}
    \end{align}
\end{subequations}
Similar to the previous section, under the assumptions of Theorem \ref{thm:gad}, it is easy to prove that the point $p$ is a 1-saddle point of $f$ and $v_1,w_1$ are the left and right eigenvectors of the Jacobian matrix $Df(p)$ corresponding to the eigenvalue $\la_1>0$ if and only if $\left(p,v_1\right)$ and $\left(p,w_1\right)$ are asymptotically stable equilibrium points of \eqref{eq:gadxv} and \eqref{eq:gadxw}, respectively.

Now we attempt to apply the simplified GAD to compute the stable manifold of a periodic orbit. The periodic orbit is an equilibrium point of the evolution equation
\[
    \pf{x(s,t)}{s}=f\left(x(s,t)\right)-\pf{x(s,t)}{t},\quad x(s,t)=x(s,t+T)
\]
where $s$ is the artificial evolution time. Assuming the periodic orbit $\ga$ has exactly one unstable eigendirection, then formally, the linearized operator $L_0$ (corresponding to the Jacobian matrix $Df(x)$ in \eqref{eq:gadxv}) and its adjoint $L_0^*$ (corresponding to $Df(x)^\top$ in \eqref{eq:gadxw}) at the periodic orbit $\ga$ are respectively
\begin{align*}
    L_0v:&=\left.\df{}{\ep}\right|_{\ep=0} f(\ga+\ep v)-\df{f(\ga+\ep v)}{t}\\
    &=Df(\ga)v-\df{v}{t}.
\end{align*}
and
\[
    L_0^*w=Df(\ga)^\top w+\df{w}{t}.
\]
The latter is obtained by noting the definition of the adjoint operator
\begin{align*}
    \int_0^T v\cdot L_0^* w\,dt&=\int_0^T L_0v \cdot w\,dt\\
    &=\int_0^T \left(Df(\ga)v-\df{v}{t}\right)\cdot w\,dt\\
    &=\int_0^T v\cdot \left(Df(\ga)^\top w+\df{w}{t}\right)\,dt,
\end{align*}
where $v,w$ are $T$-periodic functions.

Just as analyzing the stability of equilibrium points of real-coefficient ordinary differential equations requires considering the complex eigenvalues of the Jacobian matrix, we first need to complexify the system and the operators. Let the Hilbert space be
\[
    H=L^2\left((0,T),\m C^n\right),
\]
where $\m T=\m R/(T\m Z)$ is the circle of circumference $T$, equipped with the standard inner product $\ip{\cdot,\cdot}$ and the complex bilinear form $\bip{\cdot,\cdot}$
\begin{align*}
    \ip{u,v}&=\int_0^T\ip{u(t),v(t)}\,dt=\int_0^T u(t)\cdot \conj{v(t)}\,dt,\\
    \bip{u,v}&=\int_0^T \bip{u(t),v(t)}\,dt=\int_0^T u(t)\cdot v(t)\,dt.
\end{align*}
The domains of the linear operators $L_0,\; L_0^*$ are the periodic Sobolev spaces
\begin{align*}
    D\left(L_0\right)=D\left(L_0^*\right)&=H^1_{\per}((0,T),\m C^n)\\&=\{v\in H:\text{weak derivative }v'\in H,\; v(0)=v(T)\}.
\end{align*}
For any $v,w\in D\left(L_0\right)$, we have $\ip{v,w'}=-\ip{v',w}$ and $\bip{v,w'}=-\bip{v',w}$, hence $\bip{v,v'}=0$.

It is easy to verify that after complexification, the expression for $L_0^*$ remains unchanged, while \eqref{eq:gadxv} and \eqref{eq:gadxw} correspond respectively to
\begin{equation}\label{eq:gadv}
    \begin{aligned}
        \pf{v(s,t)}{s}=F[v]:&=L_0v-\bip{v,L_0v}\cdot v\\
        &=L_0v-\bip{v,Df(\ga)v}\cdot v
    \end{aligned}
\end{equation}
and
\begin{equation}\label{eq:gadw}
    \begin{aligned}
        \pf{w(s,t)}{s}=F^*[w]:&=L_0^*w-\bip{w,L_0^*w}\cdot w\\
        &=L_0^*w-\bip{w,Df(\ga)w}\cdot w.
    \end{aligned}
\end{equation}
Since the periodic orbit $\ga$ itself has already been computed, we only need to compute $v$ or $w$. This section will show that the stable equilibrium point $v$ of \eqref{eq:gadv} is precisely the unstable eigendirection of $\ga$. Similarly, the stable equilibrium point $w$ of \eqref{eq:gadw} is the adjoint unstable eigendirection of $\ga$.

To facilitate energy estimates, we introduce a new Hilbert inner product on the whole $H$ that is equivalent to the original one, such that $\{\eta_\al\}$ becomes an orthonormal basis. Specifically, $\{\eta_\al\}$ forms a Riesz basis for $H$, so there exists a linear homeomorphism $U: H\to H$ mapping some orthonormal basis $\{e_\al\}$ to $\{\eta_\al\}$. Define
\[
\ip{ u, v }_* := \ip{ U^{-1}u, U^{-1}v },\quad \forall u,v\in H.
\]
Then $\ip{\cdot,\cdot}_*$ is an inner product on $H$ and is equivalent to the original inner product. Under this new inner product,
\[
\ip{ \eta_\al, \eta_\beta }_* = \ip{ e_\al, e_\beta } = \delta_{\al\beta},
\]
i.e., $\{\eta_\al\}$ is an orthonormal basis. Consequently, $X$ and $Y$, as closed subspaces spanned by certain $\eta_\al$, are orthogonal to each other. Let $u=x+y$ with $x\in X, y\in Y$; then $\ip{u,u}_*=\ip{x,x}_*+\ip{y,y}_*\geq \ip{x,x}_*$.

For any $x=\sum_{\al\in \La_0} b_\al \eta_\al\in X\cap D(L_j)$, we have $L_j x = \sum_{\al\in \La_0} b_\al \ka_\al \eta_\al$. Hence
\[
\ip{ x, L_j x}_* = \sum_{\al\in \La_0} \conj{\ka_\al}\cdot |b_\al|^2 ,
\]
and taking the real part yields
\begin{equation}\label{eq:xLjx}
    \re\ip{ x, L_j x}_* = \la_0 \sum_{\al\in \La_0} |b_\al|^2 = \la_0 \|x\|_*^2. 
\end{equation}
Similarly, for any $y = \sum_{\al\in \La_1} c_\al \eta_\al\in Y\cap D(L_j)$, we have $L_j y = \sum_{\al\in \La_1} c_\al \ka_\al \eta_\al$. Thus
\[
\ip{y, L_j y}_* = \sum_{\al\in \La_1} \conj{\ka_\al}\cdot |c_\al|^2,
\]
and taking the real part gives
\begin{equation}\label{eq:yLjy}
    \re\ip{ y, L_j y}_* = \sum_{\al\in \La_1} \re\ka_\al |c_\al|^2 \leq (\la_0-\delta)\cdot \|y\|_*^2. 
\end{equation}
Take the perturbation $u(s) = v(s)-v_j$, where $v\in D(L_0)$ satisfies \eqref{eq:gadv}. Then $u$ satisfies
\begin{equation}\label{eq:uF}
    \frac{du}{ds} = L_j u + R[u],
\end{equation}
where the remainder $R[u]$ is given by
\begin{align*}
    R[u]&= F[v_j+u] - L_j u \\
    &=-\bip{v_j,L_0u}u-\la_j\bip{u,v_j}u\\
    &\quad -\bip{u,Df(\ga) u}v_j-\bip{u,Df(\ga) u}u\\
    &=-\bip{Df(\ga)^\top v_j + v_j',u}u-\la_j\bip{u,v_j}u\\
    &\quad -\bip{u,Df(\ga)u}v_j-\bip{u,Df(\ga)u}u.
\end{align*}
Let $h=Df(\ga)^\top v_j + v_j'$ and $C=\max_{t\in[0,T]} \|Df(\ga(t))\|_H$. Then we have the estimate
\[
    \|R[u]\|_H \le \bigl( \|h\|_H + |\la_j| + C \bigr) \|u\|_H^2 + C\|u\|_H^3.
\]
Thus there exists a constant $K>0$ such that when $\|u\|_H \leq 1$,
\[
    \|R[u]\|_H \le K \|u\|_H^2.
\]
Since $\|\cdot\|_*$ is equivalent to $\|\cdot\|_H$, there exists a constant $c>0$ such that
\begin{equation}\label{eq:c}
    c\|u\|_H \le \|u\|_* \le c^{-1}\|u\|_H. 
\end{equation}
Therefore, when $\|u\|_*\leq c$, we have $\|R[u]\|_* \le c^{-3}K \|u\|_*^2$, which we denote simply as $\|R[u]\|_* \le K \|u\|_*^2$.

Let $x = P_X u$, $y = (I-P_X)u$. Applying $P_X$ to both sides of \eqref{eq:uF} yields
\[
    \left\{
        \begin{aligned}
        \dfrac{dx}{ds} &= L_j x + P_X R[x+y], \\
        \dfrac{dy}{ds} &= L_j y + (I-P_X) R[x+y].
        \end{aligned}
    \right. 
\]
Clearly $\|P_X\|_*=\|I-P_X\|_*=1$, so when $\|x+y\|_*\leq c$,
\begin{equation}\label{eq:Rxy}
    \begin{aligned}
        \|P_X R[x+y]\|_* &\leq K(\|x\|_*^2+\|y\|_*^2),\\
        \|(I-P_X)R[x+y]\|_* &\leq K(\|x\|_*^2+\|y\|_*^2).
    \end{aligned}
\end{equation}
\begin{theorem}\label{thm:vjnonstab}
    The stable real unit eigendirection $v_j$ of the periodic orbit $\ga$ is an unstable equilibrium point of the GAD system \eqref{eq:gadv}, and the adjoint stable real unit eigendirection $w_j$ of $\ga$ is an unstable equilibrium point of the GAD system \eqref{eq:gadw}.
\end{theorem}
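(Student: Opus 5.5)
The plan is a Chetaev-type cone argument, using the spectral splitting $H=X\oplus Y$ and the estimates \eqref{eq:xLjx}, \eqref{eq:yLjy} and \eqref{eq:Rxy} set up just above. The point is that, for the stable eigendirection $v_j$, the linearized operator $L_j$ at $v_j$ has a spectral component of \emph{positive} real part. I take $X$ to be the closed span of the $\eta_\al$ with $\re\ka_\al=\la_0$ maximal. This includes the direction coming from the unstable eigendirection $v_1$ of $\ga$: it is shifted by $-\la_j$ and so has real part $\la_1-\la_j>0$, since $\re\la_j<0<\la_1$. The first step is therefore to record that, with this choice, $\la_0>0$ in \eqref{eq:xLjx}, while \eqref{eq:yLjy} gives the spectral gap $\delta>0$ on the complement $Y$. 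I also have to check that the rank-one corrections in $L_j$ do not destroy the diagonal action on the $\eta_\al$ used above. For this I will use the splitting as already constructed in the text and take \eqref{eq:xLjx}--\eqref{eq:yLjy} as given.

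Next I derive differential inequalities for $\|x\|_*^2$ and $\|y\|_*^2$ along solutions of \eqref{eq:uF} in the $\ip{\cdot,\cdot}_*$ geometry. Using $\frac{d}{ds}\|x\|_*^2=2\re\ip{x,L_jx}_*+2\re\ip{x,P_XR}_*$ together with \eqref{eq:xLjx} and \eqref{eq:Rxy}, I get
\[
\tfrac{d}{ds}\|x\|_*^2\ \ge\ 2\la_0\|x\|_*^2-2K\|x\|_*\bigl(\|x\|_*^2+\|y\|_*^2\bigr).
\]
In the same way,
\[
\tfrac{d}{ds}\|y\|_*^2\ \le\ 2(\la_0-\delta)\|y\|_*^2+2K\|y\|_*\bigl(\|x\|_*^2+\|y\|_*^2\bigr).
\]
Both hold as long as $\|u\|_*\le c$.

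The third step is to show that the cone $\mathcal C=\{\|y\|_*\le\|x\|_*\}$, intersected with a small ball $\|u\|_*\le r$, is forward invariant. On the cone boundary $\|y\|_*=\|x\|_*=\rho$, the two inequalities give
\[
\tfrac{d}{ds}\bigl(\|x\|_*^2-\|y\|_*^2\bigr)\ \ge\ 2\delta\rho^2-8K\rho^3>0
\]
provided $\rho<\delta/(4K)$. Inside the cone,
\[
\tfrac{d}{ds}\|x\|_*^2\ \ge\ 2(\la_0-4K\|x\|_*)\|x\|_*^2\ \ge\ \la_0\|x\|_*^2
\]
once $r$ is small enough. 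So $\|x\|_*$ grows exponentially until the solution leaves the ball of radius $r$.

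Finally I conclude instability. Take arbitrarily small initial perturbations of the form $u(0)=\ep\,\eta_\al$, with $\eta_\al\in X$ (using its real part, and rescaling to stay on the real unit-sphere constraint if needed). These start inside $\mathcal C$, yet they exit the fixed ball $\{\|u\|_*\le r\}$ in finite time. By the norm equivalence \eqref{eq:c} the same holds in $\|\cdot\|_H$, so $v_j$ is not Lyapunov stable. The statement for $w_j$ and \eqref{eq:gadw} follows verbatim, with $L_0^*$ in place of $L_0$: its spectrum is the complex conjugate of that of $L_0$, with the same real parts, and $\{\eta_\al\}$ is replaced by its biorthogonal (adjoint) Riesz basis.

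I expect the main obstacle to be the first step: verifying rigorously that the linearization $L_j$, which contains the rank-one terms $-\la_j u-\bip{v_j,L_0u}v_j-\bip{u,Df(\ga)v_j}v_j$, is still diagonalized by a Riesz basis with a positive top spectral value $\la_0$ separated by a gap $\delta$. I would first try to compute $L_j$ on each $\eta_\al$ with $\al\ne j$, using biorthogonality $\bip{w_j,\eta_\al}=0$, to obtain eigenvalues $\ka_\al-\la_j$. I would then handle the $\eta_j$ direction separately, since it has eigenvalue $-2\la_j$ or is neutral and in any case lies in $Y$.
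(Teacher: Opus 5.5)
Your argument is correct. It uses the same ingredients as the paper's proof: the splitting $H=X\oplus Y$ with \eqref{eq:xLjx}--\eqref{eq:yLjy}, the remainder bound \eqref{eq:Rxy}, and an initial datum $\ep\phi$ with $\phi\in X$. What differs is the invariant region and how it is controlled.

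The paper argues by contradiction. It assumes stability to get $\|u\|_*<\eta$ and works in the parabolic region $b\le\eta^{-1}a^2$, where $a=\|x\|_*$ and $b=\|y\|_*$. It shows the exit time $s_0$ from this region is finite, since $a\ge\ep e^{\la_0 s/2}$ there. It then rules out that exit through the Gronwall bound $r'\le-\delta r+2K$ on $r=b/a^2$. This gives $r(s_0)<2K/\delta$, incompatible with $r(s_0)=\eta^{-1}\ge 4K/\delta$. Because $b$ is only Lipschitz, this needs the a.e.-differentiability discussion.

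You instead use the linear cone $b\le a$ and a first-exit argument for $g=\|x\|_*^2-\|y\|_*^2$. Since $g$ is $C^1$, the boundary inequality $g'\ge 2\delta\rho^2-8K\rho^3>0$ settles invariance directly. Here $\rho\ge\ep>0$ automatically, because $\|x\|_*$ is increasing inside the cone. Exit from a fixed ball then follows without first assuming stability. Your route is shorter and avoids the regularity technicalities. The paper's route gives the sharper information $\|y\|_*=O(\|x\|_*^2)$, which the conclusion does not need.

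Some side remarks in your last paragraphs are inaccurate. Your main argument is unaffected, because you define $X$ as the top eigenspace and take \eqref{eq:xLjx}--\eqref{eq:yLjy} as given.

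\begin{itemize}
\item $v_j$ itself is an eigenvector of $L_j$ with eigenvalue $-2\la_j>0$, not a neutral one. It is the top eigenvalue whenever $-\la_j\ge\la_1$, which is why the paper writes $\la_0=\max\{\la_1-\la_j,-2\la_j\}$. So $X$ need not contain the $v_1$-direction, and $v_j$ does not always lie in $Y$.
\item The rank-one terms pair $u$ with $v_j$ and with $Df(\ga)^\top v_j+v_j'$, not with $w_j$. So biorthogonality does not make the $L_0$-eigenvectors $v_\al$ into eigenvectors of $L_j$. The actual eigenvectors are $v_\al-\bip{v_\al,v_j}v_j$ for $\al\ne j$, with eigenvalue $\la_\al-\la_j$, together with $v_j$. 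They form a Riesz basis because they are the image of $\{v_\al\}$ under the invertible map $u\mapsto u+\bip{w_j-v_j,u}v_j$, normalized by $\bip{w_j,v_j}=1$.
\item Do not rescale onto the sphere $\bip{v,v}=1$. When $X=\Span{v_j}$, that normalization annihilates the perturbation. Like the paper, perturb in all of $H$.
\end{itemize}
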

\begin{proof}
    Clearly $F[v_j]=0$. Assume $v_j$ is stable. Choose
    \[
        \eta = \min\left\{c,\frac{\la_0}{4K},\frac{\delta}{4K}\right\}>0,
    \]
    where the constant $c>0$ satisfies \eqref{eq:c}, so that for any $\|u\|_*<c$, $\|R[u]\|_*\leq K\|u\|_*^2$.

    Note that $0<\delta<\la_0/2$, hence $\eta=\min\{c,\delta/(4K)\}>0$. Then there exists $0<\ep<\eta$ such that when $\|u_0\|_*\leq \ep$, the solution $u(s)$ exists for all $s\ge 0$ and satisfies $\|u(s)\|_* < \eta$.
    
    Note that $\la_0=\max\{\la_1-\la_j,-2\la_j\}>0$ is an eigenvalue of $L_j$. Choose $\phi\in X$ to be a unit real eigenvector for $\la_0$. Take the initial perturbation $u_0=\ep \phi$. Then the stability condition holds. Thus $\|x(s)\|_* \leq \|u(s)\|_* < \eta$ for all $s\ge0$.

    Let $a(s) = \|x(s)\|_*$, $b(s) = \|y(s)\|_*$, and define the set
    \[
        S = \left\{ s \ge 0 : b(\tau) \le \eta^{-1} a^2(\tau),\quad \forall \tau \in [0,s] \right\}.
    \]
    Since $y(0)=0$, $x(0)=\ep\phi$, and by continuity, $0\in S$, so $s_0 := \sup S > 0$.

    \noindent \textbf{Step 1. $s_0$ is finite.}
    By the definition of $S$, on $[0,s_0)$ we have $b \le \eta^{-1} a^2$. Combined with the stability assumption $a < \eta$, we have $b \le a$. Hence for any $s\in [0,s_0)$,
    \begin{align*}
        \df{a^2}{s}&=2\re\ip{x,\df{x}{s}}_*\\
        &=2\re\ip{x,L_jx}_*+2\re\ip{x,P_XR[x+y]}_*\\
        &\geq 2\re\ip{x,L_jx}_*-2\left|\ip{x,P_XR[x+y]}_*\right|\\
        &\ge 2\la_0 a^2 - 2a K(a^2+b^2)\\
        &\ge 2\la_0 a^2 - 4 K a^3,
    \end{align*}
    where the second inequality follows from \eqref{eq:xLjx} and \eqref{eq:Rxy}, noting that $\|x+y\|_*<\eta\leq c$.
    
    Since $a<\eta \le \la_0/(4K)$, we have $4 K a^3 \leq \la_0 a^2$, so
    \[
        \df{a^2}{s} \geq \la_0 a^2,\quad \forall s\in [0,s_0),
    \]
    thus
    \[
        a(s) \ge \ep \exp\left(\frac{\la_0 s}{2}\right), \quad \forall s\in[0,s_0).
    \]
    By continuity,
    \[
        s_0\leq\frac{2}{\la_0}\ln\left(\frac{\eta}{\ep}\right)
    \]
    In particular, $a$ is positive on $[0,s_0]$. This shows that $a$ is at most non-differentiable at $s=0$, so
    \begin{equation}\label{eq:dasq}
        \df{a}{s}\geq \frac{\la_0}{2}a,\quad a.e.\,s\in[0,s_0).
    \end{equation}   
    \noindent \textbf{Step 2. Derive a contradiction.}
    Since $s_0$ is finite, by the definition of $s_0$ and continuity, we must have $b(s_0) = \eta^{-1} a^2(s_0)$. Similarly, for all $s\in[0,s_0)$,
    \begin{align*}
        \df{b^2}{s}&=2\re\ip{y,\df{y}{s}}_*\\
        &=2\re\ip{y,L_jy}_*+2\re\ip{y,(I-P_X)R[x+y]}_*\\
        &\leq 2\re\ip{y,L_jy}_*+2\left|\ip{y,P_XR[x+y]}_*\right|\\
        &\leq 2(\la_0-\delta) b^2 + 2b K(a^2+b^2)\\
        &\leq 2(\la_0-\delta) b^2 + 4 K a^2b,
    \end{align*}
    where the second inequality follows from \eqref{eq:yLjy} and \eqref{eq:Rxy}. Since $y\in C^1([0,+\infty),H)$, for $0\leq s_1<s_2$,
    \begin{align*}
        |b(s_1)-b(s_2)|&=\big|\|y(s_1)\|_*-\|y(s_2)\|_*\big|\\
        &\leq \|y(s_1)-y(s_2)\|_*\\
        &\leq\max_{s\in[s_1,s_2]}\left\|\frac{dy}{ds}\right\|_{*}\cdot |s_1-s_2|,
    \end{align*}
    so $b$ is locally Lipschitz, hence differentiable almost everywhere. At points of differentiability where $b(s)>0$, we have
    \begin{equation}\label{eq:dbsq}
        \df{b}{s}\leq (\la_0-\delta) b + 2 K a^2,\quad a.e.\, s\in [0,s_0),
    \end{equation}
    and at points where $b(s)=0$, $b'(s)=0$, so the inequality also holds.
    
    Let
    \[
        r(s)=\frac{b(s)}{a^2(s)},\quad \forall s\in[0,s_0],
    \]
    Since $b$ is locally Lipschitz and $a\geq \ep$, $r$ is locally Lipschitz. Combining \eqref{eq:dasq} and \eqref{eq:dbsq}, we obtain
    \begin{align*}
        \df{r}{s}&=\df{b}{s}\cdot \frac{1}{a^2}-\frac{2b}{a^3}\cdot \df{a}{s}\\
        &\leq \frac{(\la_0-\delta) b + 2 K a^2}{a^2}-\frac{2b}{a^3}\cdot \frac{\la_0}{2}a\\
        &=-\delta r+2K,\quad a.e.\, s\in[0,s_0).
    \end{align*}
    Noting that $r(0)=0$, by Gronwall's inequality,
    \[
        r(s)\leq \frac{2K}{\delta}(1-e^{-\delta s}),\quad\forall s\in[0,s_0).
    \]
    By continuity,
    \[
        \eta^{-1}=r(s_0)\leq \frac{2K}{\delta}(1-e^{-\delta s_0})<\frac{2K}{\delta},
    \]
    which contradicts the choice $\eta=\min\{c,\delta/(4K)\}$.

    This shows that the initial assumption ($v_j$ stable) is false; hence $v_j$ is an unstable equilibrium point of the nonlinear evolution equation \eqref{eq:gadv}. The instability of $w_j$ follows similarly.
\end{proof}

For sufficiently small $\ep>0$, define the perturbed operators
\begin{align*}
    L_0^\ep v&:=L_0v+\ep \Delta v=Df(\ga)v-\frac{dv}{dt}+\ep\frac{d^2v}{dt^2},\quad \forall v\in H^2_\per,\\
    L_0^{\ep,*} w&:=L_0^*w+\ep \Delta w=Df(\ga)w+\frac{dw}{dt}+\ep\frac{d^2w}{dt^2},\quad \forall w\in H^2_\per,
\end{align*}
The corresponding perturbed GAD systems are then
\begin{equation}\label{eq:Fep}
    \begin{aligned}
        \pf{v(s,t)}{s}&=F^{\ep}[v] := L_0^{\ep} v(s,t) - \bip{v, L_0^{\ep} v}\, v(s,t),\\
        \pf{w(s,t)}{s}&=F^{\ep,*}[w] := L_0^{\ep,*} w(s,t) - \bip{w, L_0^{\ep,*} w}\, w(s,t).
    \end{aligned}
\end{equation}
Direct calculation gives
\[
    \df{}{s}\bip{v,v}=2\bip{v, L_0^{\ep} v} (1 - \bip{v,v}),
\]
hence the perturbed GAD system still preserves unit norm for real vectors $v\in \m R^n$.

This section will prove the asymptotic stability of the perturbed GAD system, i.e., Theorem \ref{thm:vepjnonstab}. To this end, we make the following assumptions:

\textbf{A6}. \textit{The eigenvalues of the operators $L_0^\ep,L_0^{\ep,*}$ can be listed as $\{\la_{j,k}^\ep:1\leq j\le n,\,k\in \m Z\}$ and are analytic functions of $\ep$, i.e.,
\[
    \la_{j,k}^\ep=\la_{j,k}+\ep \la_{j,k}^{(1)}+\ep^2 \la_{j,k}^{(2)}+\cdots.
\]
The corresponding eigenvectors can also be listed as $\{v_{j,k}^\ep:1\leq j\le n,\,k\in \m Z\}$ and $\{w_{j,k}^\ep:1\leq j\le n,\,k\in \m Z\}$, respectively, and have analytic expansions
\begin{align*}
    v_{j,k}^\ep&=v_{j,k}+\ep v_{j,k}^{(1)}+\ep^2 v_{j,k}^{(2)}+\cdots,\\
    w_{j,k}^\ep&=w_{j,k}+\ep w_{j,k}^{(1)}+\ep^2 w_{j,k}^{(2)}+\cdots.
\end{align*}}

\textbf{A7}. \textit{There exists $\ep_0>0$ such that for all $0<\ep<\ep_0$, $L_0^\ep$ retains the following properties of $L_0$: the eigenvalues of $L_0^\ep$ have algebraic multiplicity $1$, the eigenvectors form a Riesz basis for $H$, and additionally, $\la_{1,0}^\ep, v_{1,0}^\ep, w_{1,0}^\ep$ remain real-valued.}

If $L_0^\ep v^\ep=\la^\ep v^\ep$, substituting into the definition (similarly for $L_0^{\ep,*}$) yields
\[
    L_0 v + \ep (L_0 v^{(1)} + v'') + O(\ep^2) = \lambda v + \ep (\lambda v^{(1)} + \lambda^{(1)} v) + O(\ep^2),
\]
Equating zeroth-order terms gives $L_0 v=\la v$. Equating first-order coefficients gives
\[
    (L_0 - \lambda) v^{(1)} = - v'' + \lambda^{(1)} v. 
\]
Acting on both sides with $\conj{w}\in \ker(L_0^*-\conj{\la})$ and taking $\bip{w,v}=\ip{v,\conj{w}}=1$, we obtain $\la^{(1)}=\bip{w,v''}$. Applying this to $\la_{j,k}$ gives
\[
    \la_{j,k}^{(1)}=\bip{w_{j,k},v''_{j,k}},
\]
which is the first-order expansion coefficient.

Denote the subscript $j=(j,0)$ for simplicity, and normalize such that $\bip{v_1,w_j}=1$. Integrating by parts yields
\[
    \bip{w_{1,k},v''_{1,k}}=\bip{w_1,v''_1}-\om_k^2-2i\cdot \om_k\bip{w_1,v_1'},
\]
where the frequency $\om_k=2\pi k/T$. Note that $w_1,v_1$ are real vectors, hence
\[
    \re(\la_{1,k}^{(1)}-\la_1^{(1)})=-\om_k^2\leq -\frac{4\pi^2}{T}<0,\quad \forall k\neq 1. 
\]
This implies
\[
    \re(\la_{1,k}^\ep-\la_1^\ep)=-\ep \om_k^2+O(\ep^2)\leq -\frac{4\pi^2\ep}T+O(\ep^2).
\]
Using similar methods, for all $j\geq 2$, we can compute
\begin{align*}
     \re(\la_{j,k}^\ep-\la_1^\ep)&=(\la_j-\la_1)-\ep(\om_k^2+\al_j \om_k+\beta_j)+O(\ep^2)\\
     &\leq -\la_1+O(\ep),
\end{align*}
where $\al_j=2\im\bip{w_j,v_j'},\;\beta_j=\bip{w_1,v_1''}-\re\bip{w_j,v_j''}$.

Therefore, to facilitate the stability analysis, we make the following regularity assumption:

\textbf{A8}. \textit{There exists $\ep_0>0$ such that for all $0<\ep<\ep_0$ and $1\leq j\leq n, k\in \m Z$,
\[\re(\la_{j,k}^\ep-\la_1^\ep)\leq -\delta(\ep)<0.\]}

The linearization of the nonlinear functional $F^\ep$ at $v_1^\ep$ is the operator $L^\ep:H^2_\per\subset H\to H$ given by
\begin{equation}\label{eq:Lep}
    \begin{aligned}
    L^\ep v :&= \left.\frac{d}{dh}\right|_{h=0}F[v_1^\ep+h v]\\
    &= L_0^\ep v - \la_1^\ep v - \bip{v_1^\ep, L_0^\ep v}v_1^\ep - \la_1^\ep\bip{v, v_1^\ep}v_1^\ep.
    \end{aligned}
\end{equation}
Similarly, the linearization of $F^{\ep,*}$ at $w_1^\ep$ is $L^{\ep,*}:H^2_\per\subset H\to H$ given by
\begin{equation}\label{eq:Lepstar}
    \begin{aligned}
    L^{\ep,*} w :&= \left.\frac{d}{dh}\right|_{h=0}F^*[w_1^\ep+h w]\\
    &= L^{\ep,*} w - \la_1^\ep w - \bip{w_1^\ep, L_0^{\ep,*} w}w_1^\ep - \la_1^\ep\bip{w, w_1^\ep}w_1^\ep.
    \end{aligned}
\end{equation}
Fix $\ep$ and denote the eigenvalues of $L^\ep$ as $\ka_\al: \al\in \m Z$ with corresponding eigenvectors $e_\al$. To introduce an equivalent inner product on $H$ such that $\{e_\al\}$ forms an orthonormal basis:
\[
    \ip{e_\al,e_\beta}_*=\delta_{\al,\beta}.
\]
Then for any $u=\sum c_\al e_\al\in H^2_\per$,
\begin{equation}\label{eq:Lepu}
    \re \ip{u,L^\ep u}_*=\sum \re \conj{\ka_\al} |c_\al|^2\leq -\sigma(\ep)\cdot \|u\|_*^2,
\end{equation}
and for $u\in H^3_\per$,
\begin{equation}\label{eq:Lepptu}
   \re \ip{\p_t u,L^\ep \p_t u}_*\leq -\sigma(\ep)\cdot \|\p_t u\|_*^2.
\end{equation}
Consider the commutator
\[
    [\partial_t, L^\ep]u = \partial_t(L^\ep u) - L^\ep(\partial_t u), \quad\forall u\in H^3_\per(0,T),
\]
Since $L_0^\ep = Df(\ga) - \partial_t + \ep \partial_t^2$, we have $[\p_t, L_0^\ep]=[\p_t, Df(\ga)]=\p_t Df(\ga)$.

Using $L^\ep = L_0^\ep - \lambda_1^\ep - A_1 - \la_1^\ep A_2$, where
\[
    A_1 u = \bip{v_1^\ep, L_0^\ep u}\, v_1^\ep,\qquad A_2 u = \bip{u, v_1^\ep}\, v_1^\ep,
\]
we obtain
\[
    [\partial_t, L^\ep] = \p_t Df(\ga) - [\partial_t, A_1] - [\partial_t, A_2].
\]
For any $u\in H^3_\per$, noting that $L_0^\ep u\in H^1_\per$, integration by parts gives
\begin{align*}
    [\p_t, A_1]u&=\p_t (A_1u)-A_1(\p_t u)\\
    &=\bip{v_1^\ep,L_0^\ep u}\cdot \p_t v_1^\ep-\bip{v_1^\ep,L_0^\ep(\p_t u)}\cdot v_1^\ep\\
    &=\bip{v_1^\ep,L_0^\ep u}\cdot \p_t v_1^\ep-\bip{v_1^\ep,\p_t (L_0^\ep u)} v_1^\ep+\bip{v_1^\ep, (\p_t Df(\ga))u}v_1^\ep\\
    &=\bip{Df(\ga)^\top v_1^\ep+\p_t v_1^\ep+\ep \p_t^2 v_1^\ep, u}\cdot \p_t v_1^\ep\\
    &\quad +\bip{Df(\ga)^\top \p_t v_1^\ep+\p_t^2 v_1^\ep+\ep \p_t^3 v_1^\ep, u}\cdot v_1^\ep\\
    &\quad +\bip{(\p_t Df(\ga)^\top) v_1^\ep,u}\cdot v_1^\ep,
\end{align*}
hence $\|[\p_t, A_1]u\|_H^2\leq K\|u\|_H^2$.

Similarly,
\begin{align*}
    [\p_t, A_2]u&=\p_t(A_2u)-A_2(\p_t u)\\
    &=\bip{u,v_1^\ep}\cdot \p_t v_1^\ep-\bip{\p_t u,v_1^\ep} v_1^\ep\\
    &=\bip{u,v_1^\ep}\cdot \p_t v_1^\ep+\bip{u,\p_t v_1^\ep} v_1^\ep,
\end{align*}
also yielding $\|[\p_t, A_2]u\|_H^2\leq K\|u\|_H^2$. Therefore $[\partial_t, L^\ep]$ is a bounded linear operator on $H$. Without loss of generality, denote its bound in the $\|\cdot\|_*$ norm also by $K$.

Assume $v\in H^3_\per$ is a solution of the perturbed GAD system \eqref{eq:Fep}. Let $u(s,t)=v(s,t)-v_1^\ep(t)$. Then $u\in H^3_\per$ satisfies the equation
\[
    \df{u}{s}=L^\ep u+R[u],
\]
where the remainder term $R[u]$ is
\begin{align*}
    R[u]&= F^\ep[v_1^\ep+u] - L^\ep u \\
    &=-\bip{v_1^\ep,L_0^\ep u}u-\la_1^\ep\bip{u,v_1^\ep}u\\
    &\quad -\bip{u,L_0^\ep u}v_1^\ep-\bip{u,L_0^\ep u}u.
\end{align*}
Let $C=\max\left\{\|v_1^\ep\|_H, \|\p_t v_1^\ep\|_H, \|\p_t^2 v_1^\ep\|_H,\max_{t\in [0,T]}\|Df(\ga(t))\|\right\}$. Using integration by parts, we estimate
\begin{align*}
    |\bip{v_1^\ep,L_0^\ep u}|&=|\bip{Df(\ga)^\top v_1^\ep +\p_t v_1^\ep +\ep \p_t^2 v_1^\ep,u}|\leq \|h\|_H\cdot \|u\|_H\\
    |\bip{u,L_0^\ep u}|&=|\bip{u,Df(\ga)u}-\ep\bip{\p_t u,\p_t u}|\leq C\|u\|_H^2+\ep\|\p_t u\|_H^2
\end{align*}
where $h=Df(\ga)^\top v_1^\ep +\p_t v_1^\ep +\ep \p_t^2 v_1^\ep$.

Thus, there exists a constant $K$ such that for all $\|u\|_H\leq 1$,
\[
    \|R[u]\|_H\leq K(\|u\|_H^2+\|\p_t u\|_H^2).
\]
Using a completely analogous method, for all $\max\{\|u\|_H,\|\p_t u\|_H\}\leq 1$,
\[
    \|\p_t R[u]\|_H\leq K(\|u\|_H^2+\|\p_t u\|_H^2).
\]
Since $\|\cdot\|_*$ is equivalent to $\|\cdot\|_H$, by \eqref{eq:c}, there exists a constant $c>0$ such that when $\max\{\|u\|_*,\|\p_t u\|_*\}\leq c$,
\begin{equation}\label{eq:Ru}
    \max\{\|\p_t R[u]\|_*, \|R[u]\|_*\} \leq K (\|u\|_*^2+\|\p_t u\|_*^2).
\end{equation}
At this point, we can state the stability theorem for the perturbed GAD system.
\begin{theorem}\label{thm:vepjnonstab}
    Under assumptions \textbf{A6}--\textbf{A8}, for any $0<\ep<\ep_0$, the real directions $(v_1^\ep,\p_t v_1^\ep)$ and $(w_1^\ep,\p_t w_1^\ep)$ are asymptotically stable equilibrium points of the perturbed GAD system $(F^\ep[v],\p_t F^\ep[v])$ and the adjoint perturbed GAD system $(F^{\ep,*}[w],\p_t F^{\ep,*}[w])$, respectively, where both $v_1^\ep$ and $w_1^\ep$ are unit vectors.
    
    More precisely, $(v_1^\ep,\p_t v_1^\ep)$ and $(w_1^\ep,\p_t w_1^\ep)$ are asymptotically stable equilibrium points on the infinite-dimensional manifold
    \[
        M:=\left\{(x,y)\in H^3_\per\times H^2_\per: y=\p_t x\right\}
    \]
    for the evolution equations
    \[
        \frac{\p x}{\p s} = F^\ep[x],\quad \frac{\p y}{\p s} = \p_t F^\ep[x]
    \]
    and
    \[
        \frac{\p x}{\p s} = F^{\ep,*}[x],\quad \frac{\p y}{\p s} = \p_t F^{\ep,*}[x],
    \]
    respectively.
\end{theorem}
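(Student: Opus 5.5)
The plan is a Lyapunov-type energy estimate in the equivalent inner product $\ip{\cdot,\cdot}_*$, run simultaneously for $u=v-v_1^\ep$ and its time derivative $\p_t u$. This mirrors the proof of Theorem \ref{thm:vjnonstab}, but now the linear part is dissipative instead of expanding.

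\textbf{Step 1: spectral gap of $L^\ep$.} First check that $F^\ep[v_1^\ep]=0$, using $L_0^\ep v_1^\ep=\la_1^\ep v_1^\ep$ and $\bip{v_1^\ep,v_1^\ep}=1$. Next compute the spectrum of $L^\ep$ from \eqref{eq:Lep} on the Riesz basis $\{v_{j,k}^\ep\}$ supplied by \textbf{A7}, with biorthogonality $\bip{w_{j,k}^\ep,v_{l,m}^\ep}=\delta$.
\begin{itemize}
\item For $(j,k)\neq(1,0)$, the projection terms either vanish or only add multiples of $v_1^\ep$. Hence $L^\ep$ is triangular with diagonal entries $\la_{j,k}^\ep-\la_1^\ep$.
\item On $v_1^\ep$ itself, $L^\ep v_1^\ep=-2\la_1^\ep v_1^\ep$.
\end{itemize}
By \textbf{A8} and $\la_1^\ep>0$, every eigenvalue $\ka_\al$ satisfies $\re\ka_\al\le-\sigma(\ep)$, where $\sigma(\ep):=\min\{\delta(\ep),2\la_1^\ep\}>0$. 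This justifies \eqref{eq:Lepu} and \eqref{eq:Lepptu}.

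\textbf{Step 2: energy inequalities on $M$.} Differentiating $\df{u}{s}=L^\ep u+R[u]$ in $t$ gives
\[
\df{(\p_t u)}{s}=L^\ep\p_t u+[\p_t,L^\ep]u+\p_t R[u].
\]
Set $a=\|u\|_*^2$ and $b=\|\p_t u\|_*^2$. Combining \eqref{eq:Lepu}, \eqref{eq:Lepptu}, the commutator bound $K$ and \eqref{eq:Ru} yields
\begin{align*}
\df{a}{s}&\le-2\sigma a+2K\sqrt a\,(a+b),\\
\df{b}{s}&\le-2\sigma b+2K\sqrt{ab}+2K\sqrt b\,(a+b).
\end{align*}
The commutator term $2K\sqrt{ab}$ is not small. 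To absorb it, use the weighted energy $E=\theta a+b$ with a large constant $\theta$. By Young's inequality, $2K\sqrt{ab}\le \sigma b+(K^2/\sigma)a$. Choosing $\theta\ge 2K^2/\sigma^2$ then gives
\[
\df{E}{s}\le-\sigma E+C'E^{3/2}.
\]

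\textbf{Step 3: bootstrap.} Choose $E(0)$ small enough that $C'E^{1/2}\le\sigma/2$ and $\max\{\|u\|_*,\|\p_t u\|_*\}\le c$, so that \eqref{eq:Ru} applies. A continuity (bootstrap) argument as in the proof of Theorem \ref{thm:vjnonstab} shows the smallness persists, so $E(s)\le E(0)e^{-\sigma s/2}$. This gives Lyapunov stability and exponential convergence of $(x,y)=(v,\p_t v)$ to $(v_1^\ep,\p_t v_1^\ep)$ in the topology of $M$. Note that $y=\p_t x$ is preserved by the flow by construction.

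\textbf{Step 4: adjoint system and unit norm.} Repeat the same argument for $w_1^\ep$ with $L^{\ep,*}$. The identity $\df{}{s}\bip{v,v}=2\bip{v,L_0^\ep v}(1-\bip{v,v})$ shows that the unit normalization is consistent along the flow.

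\textbf{Main obstacle.} The delicate point is well-posedness and regularity. The remainder contains $\ep\|\p_t u\|^2$, so closing the estimates requires working with $H^1$-level control, which is why the system is posed on $M$. Justifying that $u\in H^3_\per$ persists (via the parabolic smoothing of $\ep\p_t^2$), and that the $*$-inner product, built from a non-orthogonal Riesz basis, still yields \eqref{eq:Lepptu} for $\p_t u$, is where I expect the most care to be needed. I would handle it by a Galerkin/Fourier truncation, deriving the estimates uniformly in the truncation and passing to the limit.
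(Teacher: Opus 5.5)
Your proposal is correct and is essentially the paper's proof: your energy $\theta\|u\|_*^2+\|\p_t u\|_*^2$ with $\theta$ large is the paper's $\|u\|_*^2+\al\|\p_t u\|_*^2$ with $\al=1/\theta$ small, the commutator term is absorbed by the same Young's inequality, and the same continuity argument gives $E(s)\le E(0)e^{-\sigma s/2}$. Your Step 1 (the triangular action of $L^\ep$ on $\{v_{j,k}^\ep\}$, giving $\sigma(\ep)=\min\{\delta(\ep),2\la_1^\ep\}$) usefully makes explicit the spectral gap that the paper only asserts in \eqref{eq:Lepu}; \eqref{eq:Lepptu} is then just \eqref{eq:Lepu} applied to $\p_t u\in H^2_\per$, so the extra care you anticipate there is not needed.
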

\begin{proof}
Define the energy functional
\[
E(s) = \frac{1}{2}\left(\|u(s)\|_*^2 + \al \|\p_t u(s)\|_*^2\right),
\]
where $\al > 0$ is to be determined.

\noindent\textbf{Step 1. Energy estimate}. Differentiating with respect to $s$,
\[
    \frac{dE}{ds} = \re \ip{u, \p_s u}_* + \al \re \ip{\p_t u, \p_s \p_t u}_*.
\]
Substituting $\p_s u = L^{\ep} u + R[u]$ and using $\p_s \p_t u = L^{\ep} \p_t u + [\p_t, L^{\ep}] u + \p_t R[u]$, we obtain
\[
\begin{aligned}
    \frac{dE}{ds} &= \re \ip{u, L^{\ep} u}_* + \re \ip{u, R[u]}_* + \al \re \ip{\p_t u, L^{\ep} \p_t u}_* \\
    &\quad+ \al \re \ip{\p_t u, [\p_t, L^{\ep}] u}_* + \al \re \ip{\p_t u, \p_t R[u]}_*.
\end{aligned}
\]
Using the boundedness of $[\p_t, L^\ep]$ and the estimates \eqref{eq:Lepu}, \eqref{eq:Lepptu}, \eqref{eq:Ru}, we get
\begin{align*}
    \frac{dE}{ds}&\leq -\sigma E+\al K \|\p_t u\|_*\cdot \|u\|_*+K\|u\|_*(\|u\|_*^2+\|\p_t u\|_*^2)\\
    &\quad +\al K\|\p_t u\|_*(\|u\|_*^2+\|\p_t u\|_*^2)\\
    &\leq -\sigma E+\frac{\al\sigma}{4}\|\p_t u\|_*^2 +\frac{\al K^2}{\sigma}\|u\|_*^2+K\eta (\|u\|_*^2+\|\p_t u\|_*^2)\\
    &\quad +\al K\eta (\|u\|_*^2+\|\p_t u\|_*^2)\\
    &=-\sigma E+\left(\frac{\al K^2}{\sigma}+(1+\al)K\eta\right)\|u\|_*^2+\left(\frac{\al\sigma}{4}+(1+\al)K\eta\right)\|\p_t u\|_*^2.
\end{align*}
It can be seen that when
\[
    0<\al\leq \min\left\{1,\frac{\sigma^2}{16K^2}\right\},\quad \max\left\{\|u_*\|,\|\p_t u\|_*\right\}< \eta\leq \min\left\{c,\frac{\al\sigma}{8(1+\al)K}\right\}
\]
we have
\[
    \frac{dE}{ds}\leq -\frac{\sigma}{2}E.
\]
\noindent\textbf{Step 2. A priori estimate and continuation}. We now prove that $\|u\|_*$ and $\|\p_t u\|_*$ remain below $\eta$ during the evolution, so that the above estimate holds for all $s\ge0$. Choose the initial perturbation sufficiently small such that
\[
    \|u(0)\|_* \le \frac{1}{2} \sqrt{\al}\,\eta,\quad \|\p_t u(0)\|_* \le \frac{1}{2} \eta.
\]
Then $E(0) = \|u(0)\|_*^2 + \al \|\p_t u(0)\|_*^2 \le \frac{1}{2} \al \eta^2$. Define
\[
    s_0 = \sup\{ s \ge 0 : \max\{\|u(\tau)\|_*,\|\p_t u(\tau)\|_*\} < \eta,\quad \forall \tau\in [0,s]\}.
\]
By the initial conditions and continuity of solutions, $s_0 > 0$. On the interval $[0,s_0)$, the conditions hold, so the derived estimate is valid, giving
\[
    \frac{dE}{ds} \le -\frac{\sigma}{2} E, \quad \forall  s\in[0,s_0).
\]
Thus $E(s) \le E(0) \exp(-\frac{\sigma}{2}s)$ holds on $[0,s_0)$. Consequently,
\[
\|u(s)\|_*^2 \le E(s) \le E(0) \le \frac{1}{2} \al \eta^2,\quad
\al \|\p_t u(s)\|_*^2 \le E(s) \le E(0)\leq \frac{1}{2} \al \eta^2,
\]
which yields
\[
\|u(s)\|_* \le \sqrt{\frac{1}{2} \al}\,\eta < \eta,\quad
\|\p_t u(s)\|_* \le \sqrt{\frac{1}{2}}\,\eta < \eta.
\]
Therefore, for all $s\in [0,s_0)$, $\max\{\|u(s)\|_*,\|\p_t u(s)\|_*\}\leq \wt{\eta}<\eta$.

If $s_0$ is finite, by continuity of the solution at $s=s_0$, we also have 
\[
    \max\{\|u(s_0)\|_* ,\|\p_t u(s_0)\|_*\} \le \wt{\eta} < \eta,
\] 
contradicting the definition of $s_0$. Hence $s_0 = +\infty$. Thus the energy estimate holds for all $s\ge0$.

\noindent\textbf{Step 3. Asymptotic stability}. Then,
\[
    \|u(s)\|_*^2+\al\|\p_t u(s)\|_*^2=E(s) \le E(0) \exp\left(-\frac{\sigma}{2}s\right), \quad \forall s\geq 0.
\]
By the equivalence of the inner products, $\|u(s)\|_H$ and $\|\p_t u(s)\|_H$ also decay exponentially to zero, proving that $(v_1^{\ep},\p_t v_1^\ep)$ is asymptotically stable. The asymptotic stability of $(w_1^{\ep},\p_t w_1^\ep)$ follows similarly.
\end{proof}
Now assume that a $T$-periodic orbit with one unstable direction has been located using the algorithm in table \ref{tab:AdjointPeriod}:
\[
    \ga(\tau)=\frac{a_0}{2}+\sum_{k=1}^{N}\left(a_k\cos(2\pi k\tau)+b_k\sin(2\pi k\tau)\right),
\]
where $\tau=t/T$ is the normalized time. Let the unknown adjoint unstable direction be
\[
    w(s,\tau)=\frac{\al_0(s)}{2}+\sum_{k=1}^{N}\left(\al_k(s)\cos(2\pi k\tau)+\beta_k(s)\sin(2\pi k\tau)\right).
\]
Fix $m\geq 1$ and a perturbation coefficient $\ep>0$. Denote $\tau_j=j/m, w_j=w(s,\tau_j)$ and
\begin{align*}
    Q_j&=Df\left(x(\tau_j)\right)^\top w_j-\frac{T}{m}\left(\sum_{i=0}^{m-1}w_i^\top Df\left(x(\tau_i)\right)^\top w_i\right)w_j,\\
    S &= \frac{T}{2}\sum_{k=1}^{N} (2\pi k)^2 (\al_k^2 + \beta_k^2),
\end{align*}
then the coefficients satisfy the perturbed GAD equations
\begin{equation}\label{eq:PeriodStbmfd}
    \begin{aligned}
        \frac{d\al_0}{ds} &= \frac{2}{m}\sum_{j=0}^{m-1} Q_j \;+\; \ep S \al_0,\\
        \frac{d\al_k}{ds} &= \frac{2}{m}\sum_{j=0}^{m-1} Q_j\cos(2\pi k\tau_j) \;+\; \frac{2\pi k}{T}\,\beta_k
        -\ep (2\pi k)^2 \al_k + \ep S \al_k,\\
        \frac{d\beta_k}{ds} &= \frac{2}{m}\sum_{j=0}^{m-1} Q_j\sin(2\pi k\tau_j) \;-\; \frac{2\pi k}{T}\,\al_k
        -\ep (2\pi k)^2 \beta_k + \ep S \beta_k,
    \end{aligned}
\end{equation}
Based on this, the numerical algorithm used in this paper to compute the stable manifold of a periodic orbit with one unstable direction is:
\begin{table}[!htbp]
    \caption{\enspace Numerical algorithm for the stable manifold of a periodic orbit $\ga$ with one unstable direction.}
    \label{tab:PeriodStbmfd}
    \centering
    \begin{tabularx}{\textwidth}{>{\centering\arraybackslash}m{1.3cm}X}
        \toprule[1.5pt] 
        \textbf{S1} & Guess the initial shape coefficients $\al_k^0,\beta_k^0$ of the adjoint unstable direction of $\ga$.\\
        \midrule
        \textbf{S2} & Starting from the initial values $\al_k^0,\beta_k^0$, integrate along the vector field \eqref{eq:PeriodStbmfd} for a long time and observe convergence. If not, repeat step \textbf{S1}.\\
        \bottomrule[1.5pt]
    \end{tabularx}
\end{table}
\section{Numerical Experiments}
\label{sec:NE}
This section presents three numerical experiments: a two-machine system, a three-machine system containing only saddles, and a three-machine system with a periodic orbit. In all three systems, we uniformly take $p=50\,\text{Hz}$.

\subsection{Two-Machine System}
First, we give the parameters for $n=2$:
\begin{equation}\label{eq:G2}
    \begin{gathered}
        H=(6.5,6.5),\quad D=(0.1,0.1),\quad P=(0,0),\quad E=(1,1,1), \\
        G=\begin{pmatrix}
            0 & -0.02 & 0.02\\
            -0.02 & -0.05 & 0.03\\
            0.02 & 0.03 & 0
        \end{pmatrix},\quad 
        B=\begin{pmatrix}
            0 & 0.4996 & 0.9998\\
            0.4996 & 0 & 0.4991\\
            0.9998 & 0.4991 & 0
        \end{pmatrix}.
    \end{gathered}
\end{equation}
Applying the algorithm in table \ref{tab:1saddleGAD} to system \eqref{eq:del} with the above parameters, we obtain three 1-saddles near the asymptotically stable equilibrium point $(0,0)$ (due to the periodicity of the vector field, there are actually six 1-saddles; see Figure \ref{fig:G2_DOA}) and their associated unstable eigen-directions, and then compute the boundary of the region of attraction. The experimental results are shown in Figure \ref{fig:G2}.
\begin{figure}[!htbp]
    \centering
    \caption{\enspace Two-machine system.}
    \label{fig:G2}
    \begin{subfigure}[b]{0.49\textwidth}
        \centering
        \includegraphics[width=\textwidth]{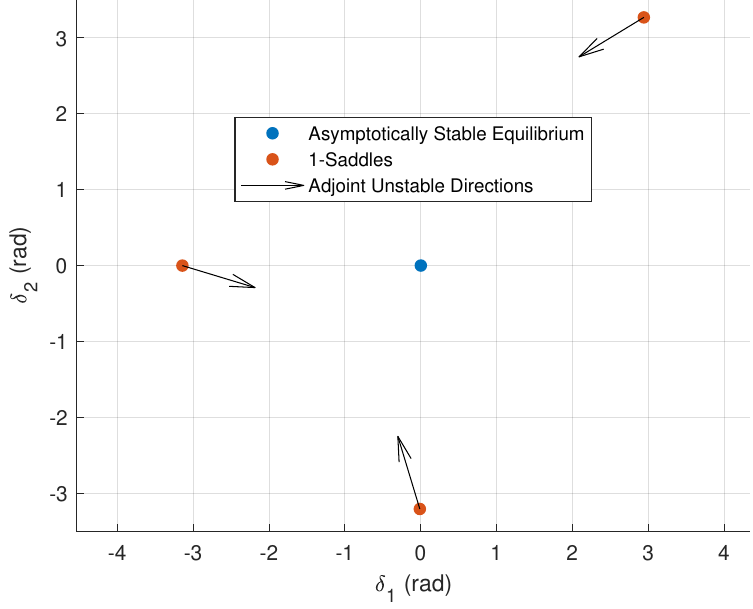}
        \caption{\enspace 1-saddles and their associated unstable eigen-directions.}
        \label{fig:G2_saddle}
    \end{subfigure}
    \begin{subfigure}[b]{0.49\textwidth}
        \centering
        \includegraphics[width=\textwidth]{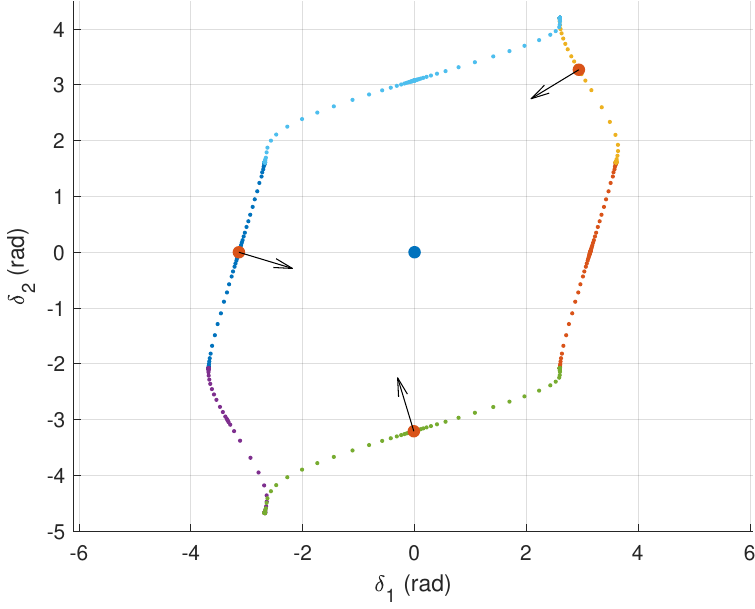}
        \caption{\enspace Boundary of the region of attraction.}
        \label{fig:G2_DOA}
    \end{subfigure}
\end{figure}

\subsection{Three-Machine System}
First, we give the parameters for $n=3$:
\begin{equation}\label{eq:G3}
    \begin{gathered}
        H=(6.5,6.5,6.5),\quad D=(0.1,0.1,0.1),\quad  P=(0,0,0),\\
        E=(1,1,1,1),\quad
        G=\begin{pmatrix}
            0 & 0 & 0 & 0\\
            0 & 0 & 0 & 0\\
            0 & 0 & 0 & 0\\
            0 & 0 & 0 & 0
        \end{pmatrix},\quad 
        B=\begin{pmatrix}
            1 & 1 & 1 & 1\\
            1 & 1 & 1 & 1\\
            1 & 1 & 1 & 1\\
            1 & 1 & 1 & 1
        \end{pmatrix}.
    \end{gathered}
\end{equation}
Applying the algorithm in table \ref{tab:1saddleGAD} to system \eqref{eq:del} with the above parameters, we obtain four 1-saddles near the asymptotically stable equilibrium point $(0,0,0)$ and their associated unstable eigen-directions, and then compute the boundary of the region of attraction. The experimental results are shown in Figure \ref{fig:G3}.
\begin{figure}[!htbp]
    \centering
    \caption{\enspace Three-machine system.}
    \label{fig:G3}
    \begin{subfigure}[b]{0.49\textwidth}
        \centering
        \includegraphics[width=\textwidth]{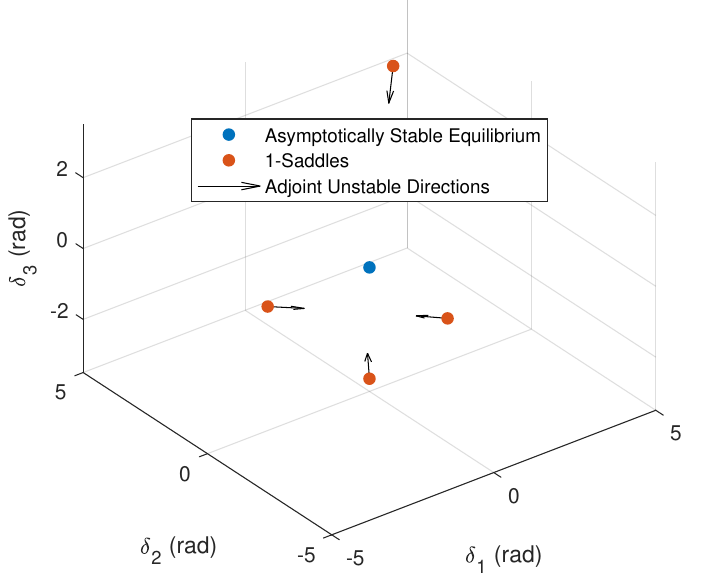}
        \caption{\enspace 1-saddles and their associated unstable eigen-directions.}
        \label{fig:G3_saddle}
    \end{subfigure}
    \begin{subfigure}[b]{0.49\textwidth}
        \centering
        \includegraphics[width=\textwidth]{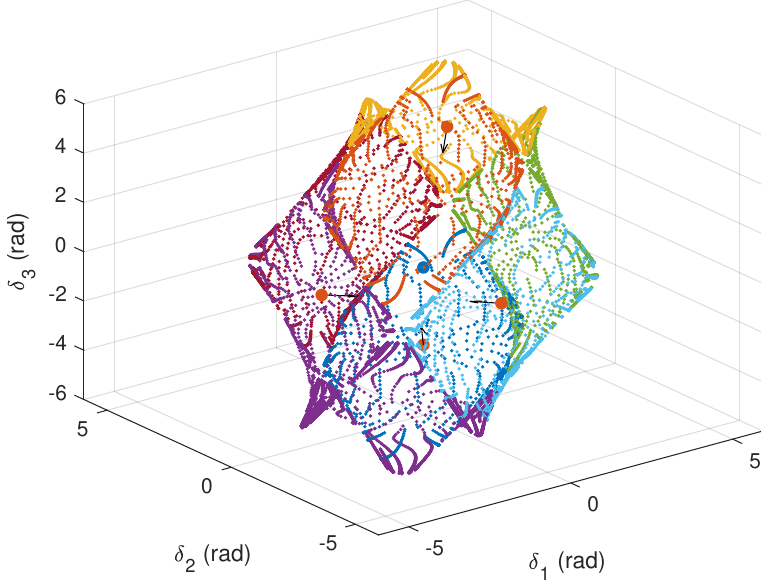}
        \caption{\enspace Boundary of the region of attraction.}
    \end{subfigure}
\end{figure}

\subsection{Three-Machine System with a Periodic Orbit}
First, we give the parameters for $n=3$:
\begin{equation}\label{eq:G3_Period}
    \begin{aligned}
        H&=(6.5,6.5,6.175),\\
        D&=(0.1,0.1,0.095),\\
        P&=(0,0,0,0),\\
        E&=(1.071, 1.0599, 1.0692, 1.0528),\\
        G&=\begin{pmatrix}
            0.1516 & 0.1698 & 0.0470 & 0.0846\\
            0.1698 & 0.3574 & 0.0847 & 0.1517\\
            0.0470 & 0.0847 & 0.2147 & 0.2745\\
            0.0846 & 0.1517 & 0.2745 & 0.5309
        \end{pmatrix},\\
        B&=\begin{pmatrix}
            -1.2583 & 0.9712 & 0.0625 & 0.1077\\
            0.9712  & -1.4828 & 0.1078 & 0.1775\\
            0.0652 & 0.1078 & -1.2944 & 0.9044\\
            0.1077 & 0.1775 & 0.9044 & -1.6044
        \end{pmatrix}.
    \end{aligned}
\end{equation}
Applying the algorithm in table \ref{tab:1saddleGAD} to system \eqref{eq:del} with the above parameters, we obtain one 1-saddle near the asymptotically stable equilibrium point $(0,0,0)$ and its associated unstable eigen-direction, and then compute the stable manifold of this saddle. The experimental results are shown in Figure \ref{fig:G3_general_saddle}.
\begin{figure}[!htbp]
    \centering
    \caption{\enspace 1-saddle and its stable manifold for the three-machine system with a periodic orbit.}
    \label{fig:G3_general_saddle}
    \begin{subfigure}[b]{0.49\textwidth}
        \centering
        \includegraphics[width=\textwidth]{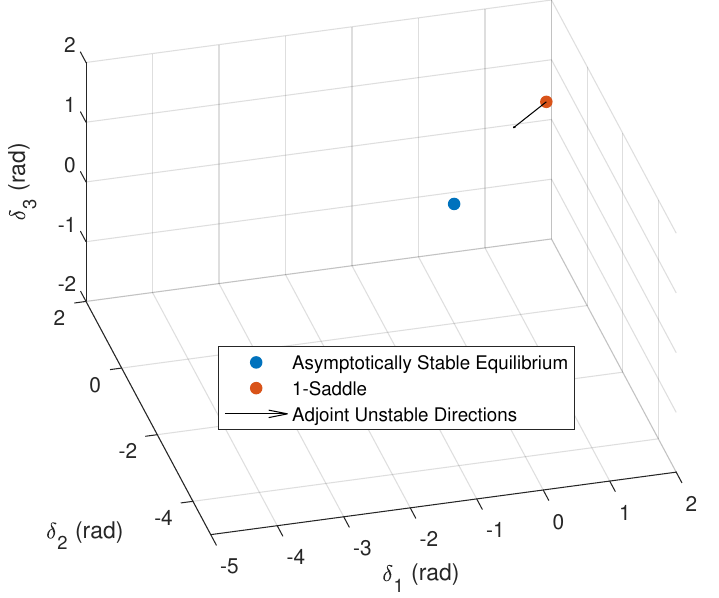}
        \caption{\enspace 1-saddle and its associated unstable eigen-direction.}
    \end{subfigure}
     \begin{subfigure}[b]{0.49\textwidth}
        \centering
        \includegraphics[width=\textwidth]{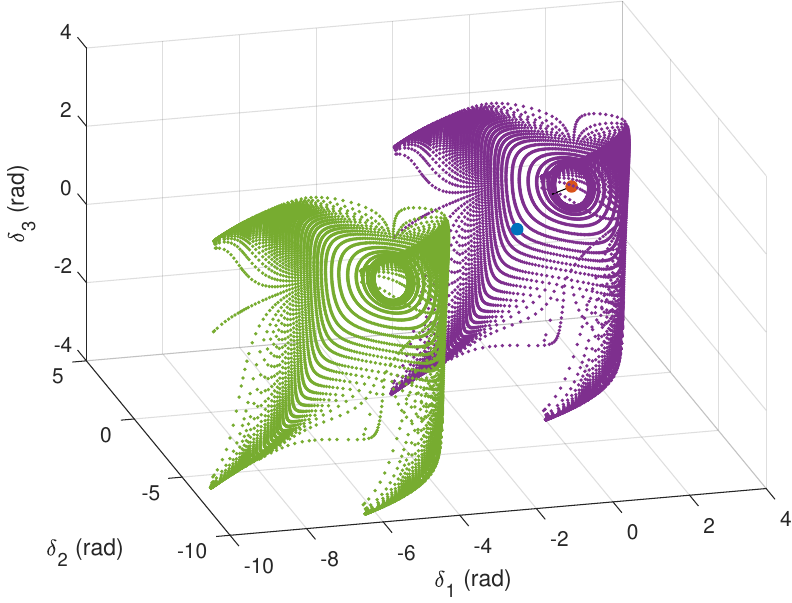}
        \caption{\enspace Stable manifold of the 1-saddle.}
    \end{subfigure}
\end{figure}

It can be seen that the stable manifold of the 1-saddle forms only two faces of the boundary of the region of attraction; it is conjectured that each of the remaining four faces contains a periodic orbit of index one\footnote{Due to the periodicity of the vector field \eqref{eq:del} in each direction with period $2\pi$, the term \textit{periodic orbit} here actually refers to $x(\tau)=2\pi \tau d + y(\tau)$, where $\tau=t/T$ is normalized time, $d\in \{-1,0,1\}^n$ is a spatial periodic direction of $x$ in \eqref{eq:del}, and $y(0)=y(1)$. Thus $y$ is a genuine periodic orbit. The period, location, and stable manifold of $y$ can be computed numerically, from which the corresponding geometric quantities of $x$ are naturally obtained.}. Using the algorithm in table \ref{tab:AdjointPeriod} to locate the periodic orbit and the algorithm in table \ref{tab:PeriodStbmfd} to compute its associated unstable eigen-direction, the experimental results are shown in Figure \ref{fig:G3_general_period}.
\begin{figure}[!htbp]
    \centering
    \caption{\enspace Periodic orbit and its associated unstable eigen-direction for the three-machine system with a periodic orbit.}
    \label{fig:G3_general_period}
    \begin{subfigure}[b]{0.49\textwidth}
        \centering
        \includegraphics[width=\textwidth]{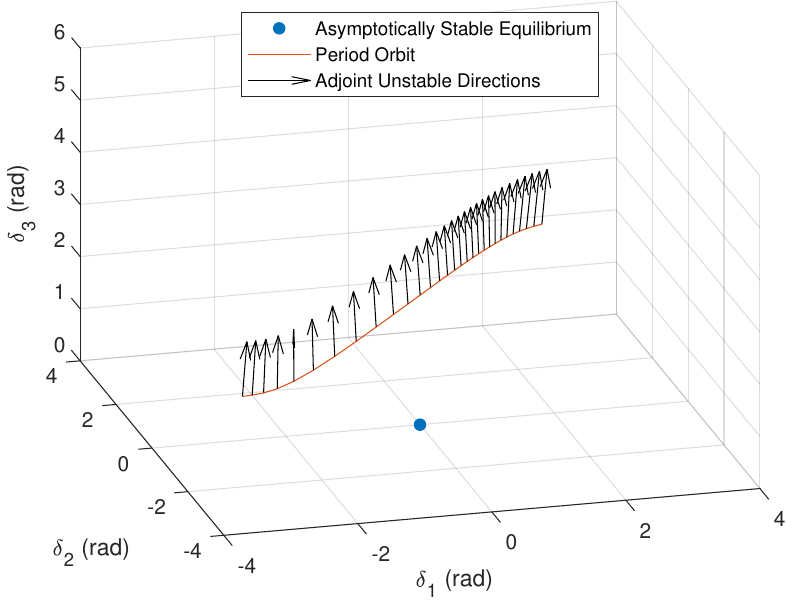}
        \caption{\enspace Periodic orbit located on the upper side of the region of attraction.}
        \label{fig:G3_general_period_up}
    \end{subfigure}
    \begin{subfigure}[b]{0.49\textwidth}
        \centering
        \includegraphics[width=\textwidth]{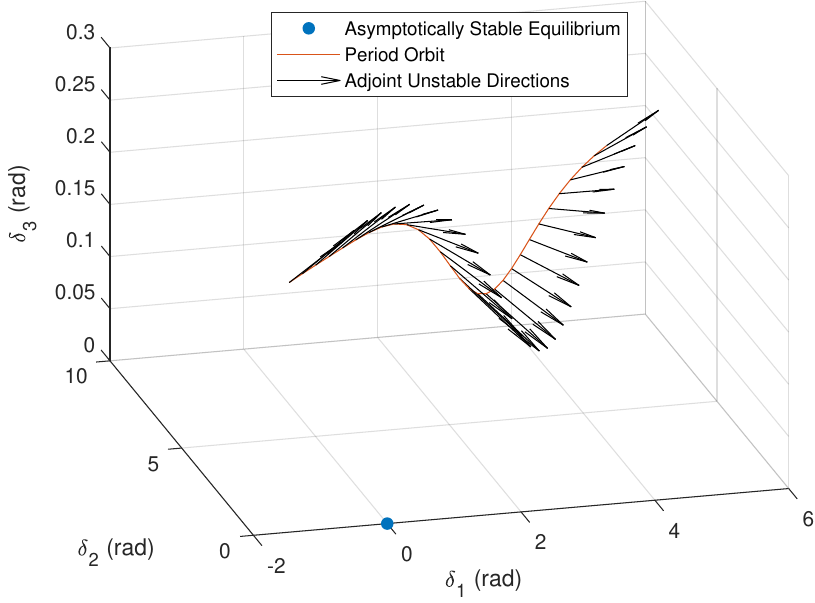}
        \caption{\enspace Periodic orbit located on the left side of the region of attraction.}
        \label{fig:G3_general_period_left}
    \end{subfigure}
\end{figure}
Using the unstable eigen-direction of the periodic orbit, its stable manifold is computed, and the experimental results are shown in Figure \ref{fig:G3_general_period_stabmfd}.
\begin{figure}[!htbp]
    \centering
    \caption{\enspace Stable manifolds of the periodic orbits for the three-machine system with a periodic orbit.}
    \label{fig:G3_general_period_stabmfd}
    \begin{subfigure}[b]{0.49\textwidth}
        \centering
        \includegraphics[width=\textwidth]{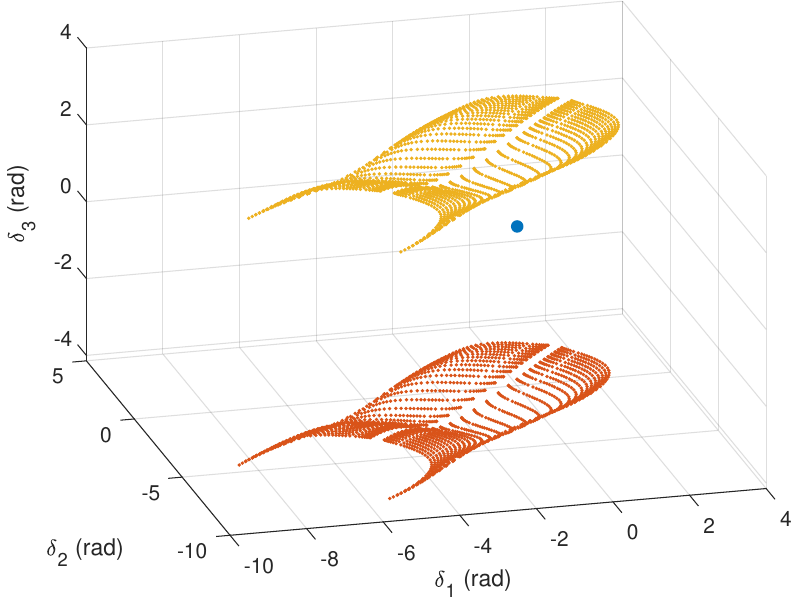}
        \caption{\enspace Stable manifolds of the periodic orbits on the upper and lower faces.}
    \end{subfigure}
    \begin{subfigure}[b]{0.49\textwidth}
        \centering
        \includegraphics[width=\textwidth]{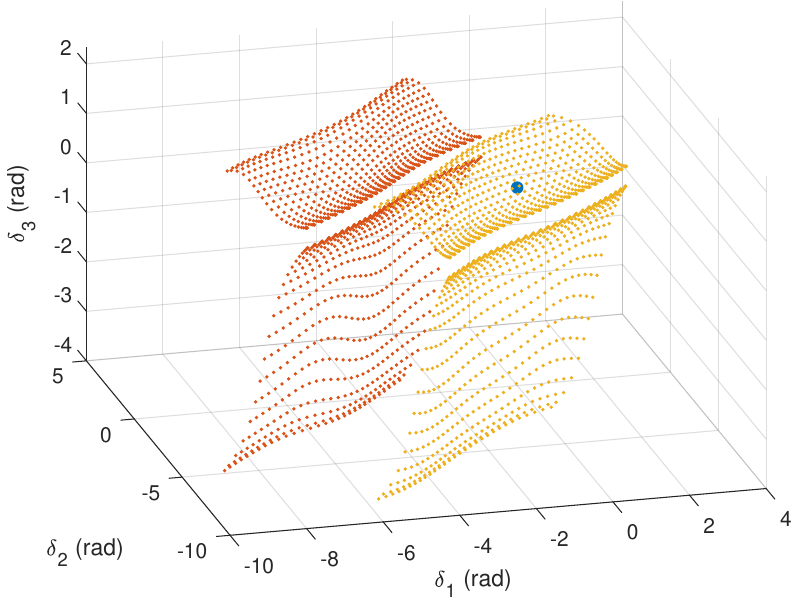}
        \caption{\enspace Stable manifolds of the periodic orbits on the left and right faces.}
    \end{subfigure}
\end{figure}
The stable manifolds of the six faces together constitute the boundary of the region of attraction, as shown in Figure \ref{fig:G3_general_DOA}.
\begin{figure}[!htbp]
    \centering
    \caption{\enspace Boundary of the region of attraction for the three-machine system with a periodic orbit.}
    \label{fig:G3_general_DOA}
        \centering
        \includegraphics[width=0.5\textwidth]{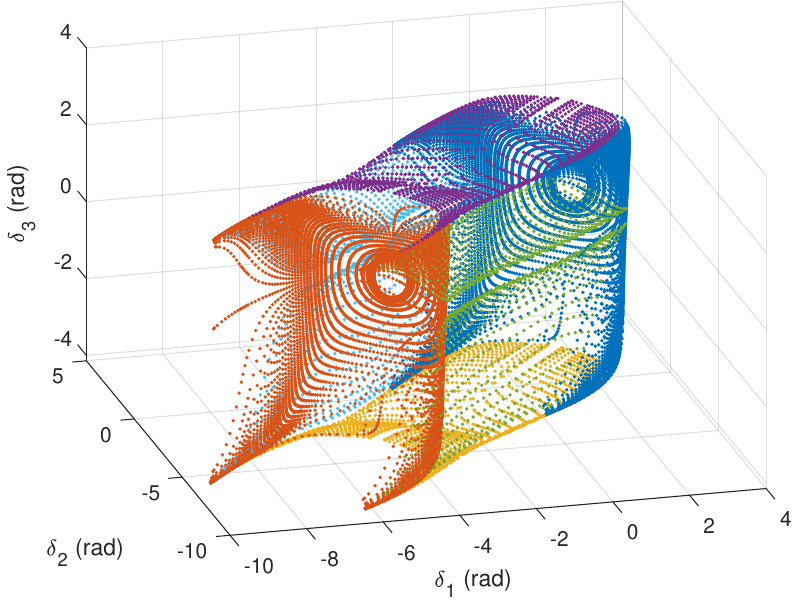}
\end{figure}

When applying the algorithm in table \ref{tab:PeriodStbmfd} to obtain Figure \ref{fig:G3_general_period_up}, we specifically set the perturbation coefficient $\ep=0.07$, the number of sampling points on the periodic orbit $m=30$, and the truncation order of the Fourier series $N=5$. In theory, the adjoint unstable eigen-direction $w(t)$ is orthogonal to $\ga'(t)$ at every point. table \ref{tab:wga_up} gives the cosines of the angles between $w(\tau_j)$ and $\ga'(\tau_j)$, where $1\leq j\leq 30$.
\begin{table}[!htbp]
    \caption{\enspace Cosines of the angles between the adjoint unstable eigen-direction and the tangent vector of the periodic orbit for Figure \ref{fig:G3_general_period_up}.}
    \label{tab:wga_up}
    \centering
    \small
    \setlength{\tabcolsep}{7pt}
    \begin{tabular}{
        S[table-format=-1.6]
        S[table-format=-1.6]
        S[table-format=-1.6]
        S[table-format=-1.6]
        S[table-format=-1.6]
        S[table-format=-1.6]
    }
        \toprule
        -0.105752 & -0.141213 & -0.137720 & -0.096649 & -0.046490 & -0.012324 \\
        0.003312 &  0.008898 &  0.010685 &  0.009790 &  0.005925 &  0.001326 \\
        -0.000596 &  0.000124 & -0.000573 & -0.005352 & -0.011280 & -0.013309 \\
        -0.011165 & -0.009956 & -0.012732 & -0.014305 & -0.004559 &  0.021497 \\
        0.055455 &  0.077118 &  0.071835 &  0.042727 & -0.000561 & -0.052543 \\
        \bottomrule
    \end{tabular}
\end{table}

For the reversed vector field $-f$ and the time-reversed periodic orbit $\ga(T-t)$, using an algorithm similar to that in table \ref{tab:PeriodStbmfd} we compute the unstable eigen-direction $v(T-t)$ of $\ga(T-t)$, which is precisely the stable eigen-direction of the forward-time periodic orbit $\ga(t)$. In theory $w(t)$ is orthogonal to $v(t)$ at every point. table \ref{tab:wv_up} gives the cosines of the angles between $w(\tau_j)$ and $v(\tau_j)$.
\begin{table}[!htbp]
    \caption{\enspace Cosines of the angles between the adjoint unstable eigen-direction and the stable eigen-direction for Figure \ref{fig:G3_general_period_up}.}
    \label{tab:wv_up}
    \centering
    \small
\setlength{\tabcolsep}{7pt}
\begin{tabular}{
    S[table-format=-1.6]
    S[table-format=-1.6]
    S[table-format=-1.6]
    S[table-format=-1.6]
    S[table-format=-1.6]
    S[table-format=-1.6]}
        \toprule
        0.000753 & 0.000084 & -0.000471 & -0.000815 & -0.000871 & -0.000705 \\
        -0.000472 & -0.000281 & -0.000161 & -0.000095 & -0.000055 & -0.000032 \\
        -0.000025 & -0.000042 & -0.000079 & -0.000127 & -0.000176 & -0.000238 \\
        -0.000335 & -0.000460 & -0.000545 & -0.000492 & -0.000246 & 0.000206 \\
        0.000947 & 0.001976 & 0.002812 & 0.002918 & 0.002348 & 0.001527 \\
        \bottomrule
    \end{tabular}
\end{table}

Similarly, when applying the algorithm in table \ref{tab:PeriodStbmfd} to obtain Figure \ref{fig:G3_general_period_left}, we specifically set the perturbation coefficient $\ep=0.005$, the number of sampling points on the periodic orbit $m=30$, and the truncation order of the Fourier series $N=5$. Tables \ref{tab:wga_left} and \ref{tab:wv_left} give the cosines of the angles between $w(\tau_j)$ and $\ga'(\tau_j)$, and between $w(\tau_j)$ and $v(\tau_j)$, respectively.
\begin{table}[!htbp]
    \centering
    \small
    \setlength{\tabcolsep}{7pt}
    \caption{\enspace Cosines of the angles between the adjoint unstable eigen-direction and the tangent vector of the periodic orbit for Figure \ref{fig:G3_general_period_left}.}
    \label{tab:wga_left}
    \begin{tabular}{
        S[table-format=-1.6]
        S[table-format=-1.6]
        S[table-format=-1.6]
        S[table-format=-1.6]
        S[table-format=-1.6]
        S[table-format=-1.6]
    }
        \toprule
        0.000629 & 0.000600 & 0.000622 & 0.000653 & 0.000619 & 0.000501 \\
        0.000330 & 0.000127 & -0.000134 & -0.000452 & -0.000745 & -0.000900 \\
        -0.000890 & -0.000795 & -0.000710 & -0.000649 & -0.000571 & -0.000460 \\
        -0.000361 & -0.000305 & -0.000257 & -0.000155 & 0.000012 & 0.000187 \\
        0.000316 & 0.000406 & 0.000499 & 0.000604 & 0.000676 & 0.000677 \\
        \bottomrule
    \end{tabular}
\end{table}

\begin{table}[!htbp]
    \centering
    \small
    \setlength{\tabcolsep}{7pt}
    \caption{\enspace Cosines of the angles between the adjoint unstable eigen-direction and the stable eigen-direction for Figure \ref{fig:G3_general_period_left}.}
    \label{tab:wv_left}
    \begin{tabular}{
        S[table-format=-1.6]
        S[table-format=-1.6]
        S[table-format=-1.6]
        S[table-format=-1.6]
        S[table-format=-1.6]
        S[table-format=-1.6]
    }
        \toprule
        -0.000005 & -0.000021 & -0.000005 &  0.000002 & -0.000033 & -0.000090 \\
        -0.000118 & -0.000106 & -0.000089 & -0.000096 & -0.000099 & -0.000065 \\
        -0.000008 &  0.000022 &  0.000002 & -0.000039 & -0.000061 & -0.000061 \\
        -0.000068 & -0.000097 & -0.000121 & -0.000110 & -0.000069 & -0.000036 \\
        -0.000034 & -0.000037 & -0.000013 &  0.000034 &  0.000059 &  0.000037 \\
        \bottomrule
    \end{tabular}
\end{table}

\subsection{Level Set Characterization}
In the engineering practice of power systems, it is necessary to automatically and quickly determine whether the system starting from a specific state (i.e., some $\delta\in \m R^n$) can return to the stable state (i.e., the asymptotically stable equilibrium point $0$). At this point, it is neither feasible to rely on human operators to visually judge whether the state point is enclosed by the computed point cloud of the boundary of the region of attraction, nor is there often sufficient time for time-domain simulation. Therefore, this section uses neural network methods to solve the level sets of the boundaries of the region of attraction for the three examples in the previous section. After precomputing the level set neural network, one can quickly determine whether the system can return to the steady state by directly evaluating the level function value at the state point.

All models adopt a fully connected neural network architecture $f_{\theta}:\mathbb{R}^n\to [0,+\infty)$, which follows the design principles of deep feedforward networks that have been successfully applied to high-dimensional function approximation tasks \cite{Bengio2013, LeCun1998}. The network parameters $\theta = \{W_i, b_i:1\leq i\leq 4\}$ include weight matrices $W_i$ and bias vectors $b_i$ for 3 hidden layers and 1 output layer. The network is defined as
\[
    f_{\theta}(x) = \|x\|^2 \cdot \bigl( \operatorname{Softplus}(\mathrm{MLP}_{\theta}(x)) + 10^{-4} \bigr),
\]
where $\operatorname{Softplus}(z)=\ln(1+e^z)$ is a smooth approximation of the ReLU activation, which has been shown to accelerate training in deep networks \cite{Hinton2012}, and $\mathrm{MLP}_{\theta}$ is a multilayer perceptron consisting of 3 hidden layers (128 neurons each, activation function $\tanh$) and 1 linear output layer. Clearly $f_{\theta}$ is nonnegative and $f_{\theta}(0)=0$.

Given the point set $\left\{x_i:1\leq i\leq N\right\}\subset\m{R}^n$ on the boundary of the region of attraction computed in the previous section as the training point set, the loss function is
\[
    \ma L(\theta)=\ma L_{\text{fit}}+\ma L_{\text{grad}} +\ma L_{\text{ext}},
\]
where the fitting term
\[
   \ma L_{\text{fit}}  = w_{\text{fit}} \cdot \frac{1}{N}\sum_{i=1}^{N}\left(f_{\theta}\left(x_i\right)-1\right)^2
\]
imposes a penalty at the known boundary sample points $x_i$; minimizing this loss drives $f_\theta(x_i)$ towards $1$, thereby constraining the unit level set $\{x: f_\theta(x)=1\}$ to lie near these sample points.

The gradient penalty term is
\[
    \ma L_{\text{grad}} = w_{\text{grad}} \cdot \frac{1}{N}\sum_{i=1}^{N}\left(\tau - \left\|D_x f_{\theta}\left(x_i\right)\right\|\right)_+^2,
\]
where $(\cdot)_+=\max(0,\cdot)$ and $\tau\geq 1$. Intuitively, if the function has a larger gradient near $x_i$, the distance required along the radial path from the origin $0$ to $x_i$ for the function value to grow from $0$ to $1$ will be smaller, thereby constraining the level set $f_\theta=1$ near the training points and preventing it from expanding outward excessively.

The exterior point constraint term is
\[
\mathcal{L}_{\text{ext}} = w_{\text{ext}} \cdot \frac{1}{M}\sum_{j=1}^{M}\bigl(t - f_{\theta}(x_j^{\text{ext}})\bigr)_+^2
\]
where $t>1$, and the exterior point set $\{x_j^{\text{ext}}\}$ is generated by radial extension: for each training point $x_i$, extend radially along $u_i=x_i/\|x_i\|$ by a distance $d = \max(r\|x_i\|,\,0.002)$ to obtain $x^{\text{ext}} = x_i + d u_i$, with $r$ a preset radial extension ratio. If $f_\theta(x_j^{\text{ext}}) < t$, the loss term drives the function value upward. Since the fitting term makes $f_\theta(x_i) \approx 1$ and the exterior point is located a distance $d$ radially outward from the corresponding training point and is required to satisfy $f_\theta(x_j^{\text{ext}})\ge t>1$, the function must increase from approximately $1$ to at least $t$ over the radial distance $d$, i.e., the radial derivative must be at least approximately $(t-1)/d$. This makes it difficult for the level set $f_\theta=1$ to expand outward and stay close to the training points.

To prevent overfitting from the finite set of training points, techniques such as early stopping and gradient penalization are used. Dropout regularization \cite{Srivastava2014} has also proven effective in similar geometric learning tasks, though we rely here on the explicit $L_{\text{grad}}$ and $L_{\text{ext}}$ terms for stability.

All models use the Adam optimizer with a learning rate of $10^{-3}$, implemented with the PyTorch framework, and random seeds (Python, NumPy, PyTorch) are fixed to 42. Figures \ref{fig:fit_G2} to \ref{fig:fit_G3_Period} demonstrate the effective fitting results of this method for different systems. This success echoes the broader trend in which deep learning has dramatically improved performance in recognition and representation tasks \cite{ Hinton2012, Krizhevsky2012}.

For the two-machine system \eqref{eq:G2}, the level set $f=0.9$ is shown in Figure \ref{fig:fit_G2}. The loss function weights are $w_{\text{fit}}=300$, $w_{\text{origin}}=2$, $w_{\text{grad}}=5$, $w_{\text{ext}}=10$, the gradient threshold $\tau=1.0$, the target output value $t=1.2$, the radial extension ratios $r\in\{0.005,\,0.01\}$, and the number of training epochs is 9000.
\begin{figure}[!htbp]
    \centering
    \caption{\enspace Level set of the boundary of the region of attraction for the two-machine system.}
    \label{fig:fit_G2}
        \centering
        \includegraphics[width=0.5\textwidth]{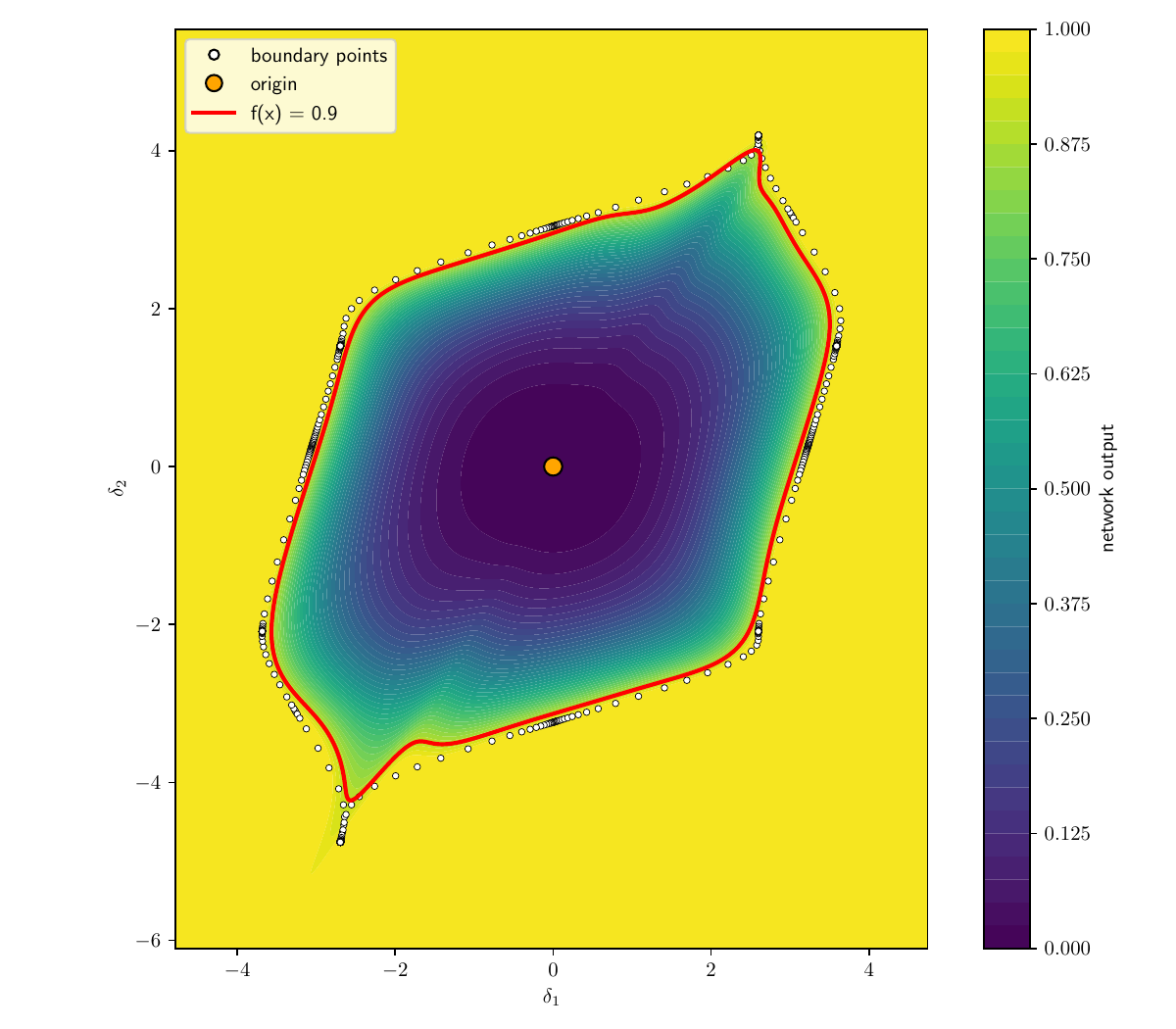}
\end{figure}

For the three-machine system \eqref{eq:G3}, the level set $f=0.9$ is shown in Figure \ref{fig:fit_G3}. The loss function weights are $w_{\text{fit}}=500$, $w_{\text{origin}}=1$, $w_{\text{grad}}=0$, $w_{\text{ext}}=20$, the target output value $t=1.1$, the radial extension ratios $r\in\{0.02,\,0.05\}$, and the number of training epochs is 6500.
\begin{figure}[!htbp]
    \centering
    \caption{\enspace Level set of the boundary of the region of attraction for the three-machine system.}
    \label{fig:fit_G3}
        \centering
        \includegraphics[width=0.5\textwidth]{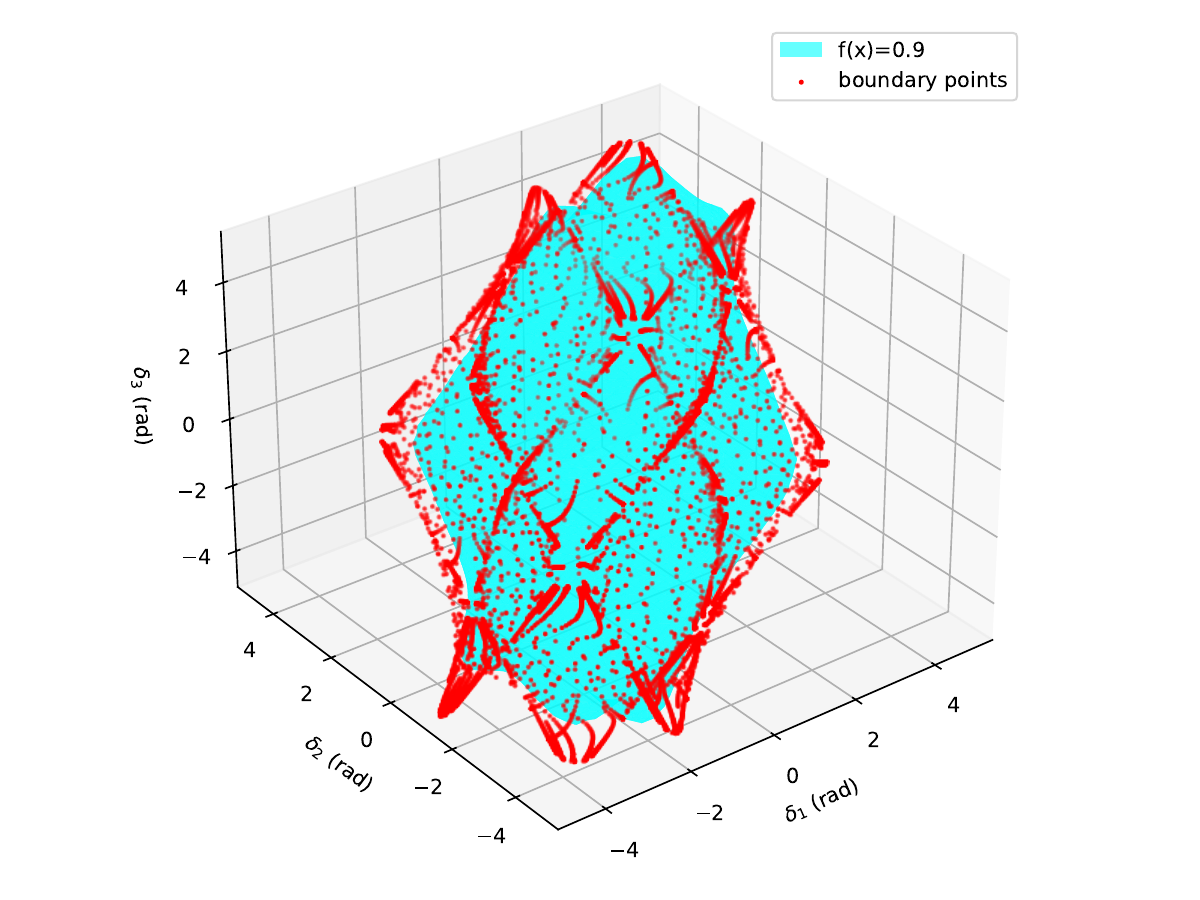}
\end{figure}

For the three-machine system with a periodic orbit \eqref{eq:G3_Period}, the level set $f=0.97$ is shown in Figure \ref{fig:fit_G3_Period}. The loss function weights are $w_{\text{fit}}=500$, $w_{\text{origin}}=1$, $w_{\text{grad}}=0$, $w_{\text{ext}}=20$, the target output value $t=1.1$, the radial extension ratios $r\in\{0.02,\,0.05\}$, and the number of training epochs is 6500.
\begin{figure}[!htbp]
    \centering
    \caption{\enspace Level set of the boundary of the region of attraction for the three-machine system with a periodic orbit.}
    \label{fig:fit_G3_Period}
        \centering
        \includegraphics[width=0.5\textwidth]{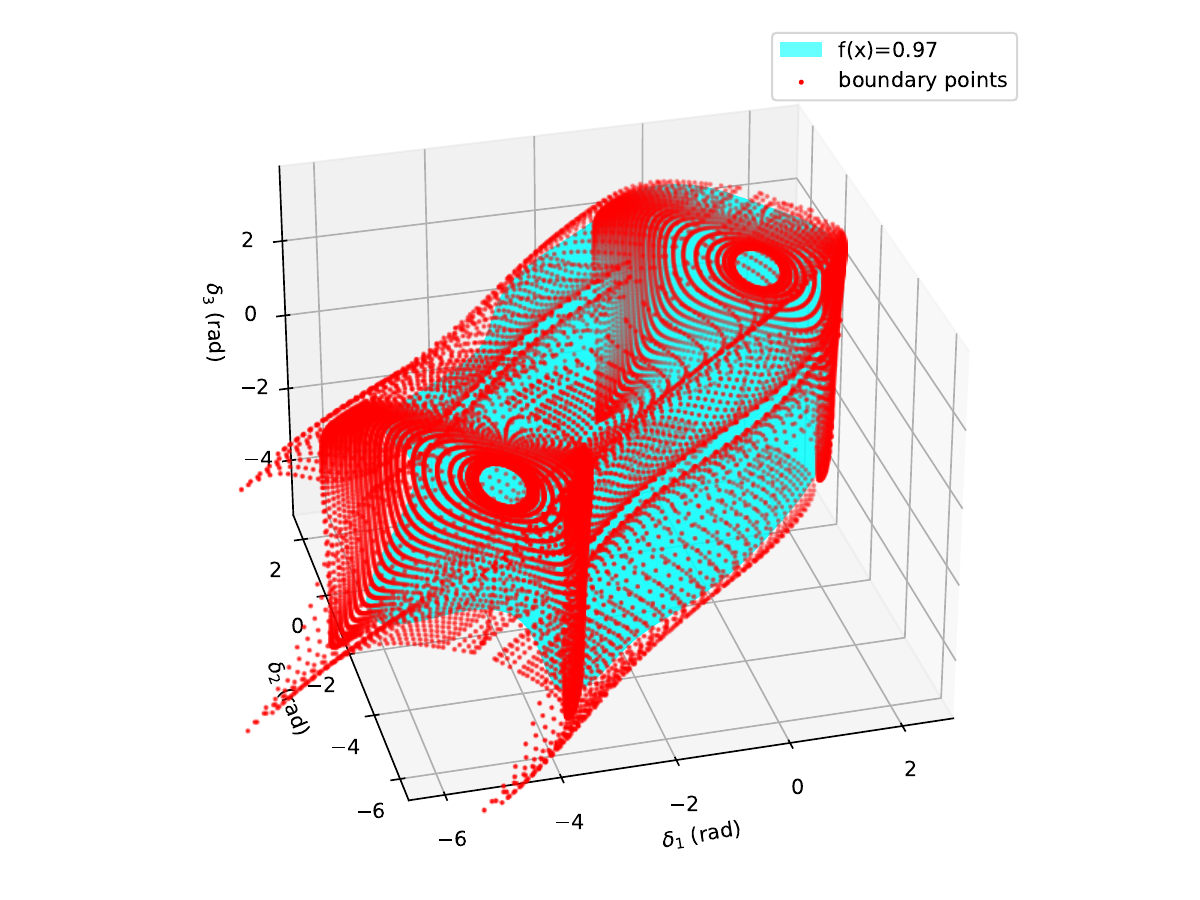}
\end{figure}

From Figures \ref{fig:fit_G2} to \ref{fig:fit_G3_Period}, it can be seen that the trained (conservative) level set can well reflect the boundary of the region of attraction, and can also learn its shape well at non-smooth points on the boundary. The $\|x\|^2$ factor forces the function to be zero at the origin, and combined with the universal approximation capability of the MLP, it can approximate closed surfaces of complex shapes without requiring a priori assumptions about the boundary morphology. However, limitations are also evident: for example, the loss weights $w_{\text{fit}}, w_{\text{grad}}, w_{\text{ext}}$, the gradient threshold $\tau$, the exterior point target value $t$, and the radial extension ratio $r$ need to be manually tuned for each specific problem, lacking automatic selection criteria; improper settings may lead to an overly tight or loose boundary, or training divergence. Moreover, the network structure does not explicitly enforce radial monotonicity; although numerical experiments have not produced multiple level sets or holes, there is no theoretical guarantee of the radial monotonicity of $f_\theta$. These limitations await further research and improvement.
\section{Conclusion}
\label{sec:conclusion}
This paper makes progress in both theoretical analysis and algorithm design for the numerical computation of the domain of attraction boundary of synchronous generator systems. Theoretically, by introducing a topological homeomorphism assumption based on classical results, it is proved that the domain of attraction boundary can be constituted by the closure of the union of the stable manifolds of critical elements with index one (saddle points or periodic orbits), providing a foundation for numerical computation. Algorithmically, the gentlest ascent dynamics method is employed for 1-saddle points, transforming them into asymptotically stable equilibrium points of an augmented system. For periodic orbits with index one, an adjoint operator-based localization algorithm is used, and the GAD method is extended to infinite-dimensional settings for computing stable manifolds of periodic orbits. By introducing artificial viscosity, a stability theory for the perturbed system is established, providing theoretical guarantees for computing stable manifolds of periodic orbits. Numerical experiments on a two-machine system, a three-machine system containing only saddle points, and a three-machine system containing periodic orbits validate the effectiveness and geometric accuracy of the proposed algorithms, offering a new numerical tool for transient stability analysis of power systems.
\section{Acknowledgments}
This study is supported by Energy Development Research institute of China Southern Power Grid (Grant No. EDRI-XT-ZLYJ-2025-102) and the National Natural Science Foundation of China (Grant No. 92470119).
\bibliographystyle{unsrtnat}
\bibliography{ref}
\end{document}